\newtheorem{thm}{Theorem}[section]
\newtheorem{cor}[thm]{Corollary}
\newtheorem{lem}[thm]{Lemma}
\newtheorem{example}[thm]{Example}
\newtheorem{obs}[thm]{Observation}
\journal{}
\begin{document}
\begin{spacing}{1.15}
\begin{CJK*}{GBK}{song}
\begin{frontmatter}
\title{\textbf{Oriented spanning trees and stationary distribution of digraphs}}

\author{Jiang Zhou}\ead{zhoujiang@hrbeu.edu.cn}
\author{Changjiang Bu}

\address{College of Mathematical Sciences, Harbin Engineering University, Harbin 150001, PR China}

\begin{abstract}
By using biclique partitions of digraphs, this paper gives reduction formulas for the number of oriented spanning trees, stationary distribution vector and Kemeny's constant of digraphs. As applications, we give a method for enumerating spanning trees of undirected graphs by vertex degrees and biclique partitions. The biclique partition formula also extends the results of Knuth and Levine from line digraphs to general digraphs.
\end{abstract}

\begin{keyword}
Oriented spanning tree, Matrix-tree theorem, Random walk, Stationary distribution
\\
\emph{AMS classification (2020):} 05C30, 05C50, 05C05, 05C81, 05C20
\end{keyword}
\end{frontmatter}

\section{Introduction}
Let $G$ be a weighted digraph with vertex set $V(G)$ and edge set $E(G)$, and each edge $e=ij\in E(G)$ is weighted by an indeterminate $w_e(G)=w_{ij}(G)$. The notation $e=ij\in E(G)$ means there exists a directed edge from tail vertex $i$ to head vertex $j$, and the tail and head of $e$ are denoted by $i=t(e)$ and $j=h(e)$, respectively. The outdegree and the indegree of a vertex $v$ in $G$ are denoted by $d_v^+(G)$ and $d_v^-(G)$, respectively. The word ``weighted" is omitted when $w_e(G)=1$ for each $e\in E(G)$. The \textit{line digraph} $\mathcal{L}(G)$ of a digraph $G$ has vertex set $V(\mathcal{L}(G))=E(G)$, and there exits directed edge from $e$ to $f$ in $\mathcal{L}(G)$ if and only if $h(e)=t(f)$.

An \textit{oriented spanning tree} \cite{Chung,Levine} of weighted digraph $G$ is a subtree containing all vertices of $G$, in which one vertex, the root, has outdegree $0$, and every other vertex has outdegree $1$. Let $\mathbb{T}_u(G)$ denote the set of oriented spanning trees of $G$ with root $u$, and let $\kappa_u(G)=|\mathbb{T}_u(G)|$. A spanning tree enumerator of $G$ is defined as
\begin{eqnarray*}
t_u(G)=\sum_{T\in\mathbb{T}_u(G)}\prod_{e\in E(T)}w_e(G).
\end{eqnarray*}
If $w_e(G)=1$ for each $e\in E(G)$, then $t_u(G)=\kappa_u(G)$ is the number of oriented spanning trees of $G$ with root $u$. Some algebraic formulas for $\kappa_u(G)$ and $t_u(G)$ were given in \cite{Chaiken,Chung,Tutte}.

In the study of random walks on digraphs, one basic problem is to determine the stationary distribution vector, which represents the long-term behaviour of the Markov chain associated with the digraph \cite{Aksoy}. The stationary distribution has applications in PageRank algorithms \cite{Andersen}.

There exists a closed relation between the stationary distribution vector and spanning tree enumerators of digraphs. For a strongly connected digraph $G$ with $n$ vertices, its stationary distribution vector $\pi=(\pi_1,\ldots,\pi_n)^\top$ satisfies \cite{Aldous,Kirkland} $\pi_i=\frac{t_i(\widetilde{G})}{\sum_{j=1}^nt_j(\widetilde{G})}$, where $\widetilde{G}$ is the weighted digraph obtained from $G$ by taking $w_{ij}(\widetilde{G})=d_i^+(G)^{-1}$ for each $ij\in E(G)$. Let $m_{ij}$ be the mean first passage time from $i$ to $j$, then the value $\mathcal{K}(G)=\sum_{j\in V(G),j\neq i}m_{ij}\pi_j$ (does not depend on $i$) is called the \textit{Kemeny's constant} \cite{Kirkland} of $G$.

Some formulas for counting spanning trees in undirected line graphs can be found in \cite{Dong,Gong,Yan,Zhou}. For line digraphs, Knuth \cite{Knuth} proved the following formula for counting oriented spanning trees. Combinatorial and bijective proofs of the Knuth formula are given in \cite{Orlin} and \cite{Bidkhori}, respectively.
\begin{thm}\label{thm1.1}
\textup{[13, Formulas (5) and (7)]} Let $G$ be a digraph such that $d_u^+(G)d_u^-(G)>0$ for each $u\in V(G)$. For any edge $e=ij$ of $G$, we have
\begin{eqnarray*}
\kappa_e(\mathcal{L}(G))=\left(\kappa_j(G)-d_j^+(G)^{-1}\sum_{kj\in E(G)\atop k\neq i}\kappa_k(G)\right)\prod_{v\in V(G)}d_v^+(G)^{d_v^-(G)-1}.
\end{eqnarray*}
If $d_u^+(G)=d_u^-(G)$ for each $u\in V(G)$, then
\begin{eqnarray*}
\kappa_e(\mathcal{L}(G))=d_j^+(G)^{-1}\kappa_u(G)\prod_{v\in V(G)}d_v^+(G)^{d_v^+(G)-1}~(u\in V(G)).
\end{eqnarray*}
\end{thm}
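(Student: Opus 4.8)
\medskip
\noindent\textit{Proof outline.} The plan is to deduce both formulas from the directed Matrix--Tree Theorem together with Sylvester's determinant identity, using the fact that the line digraph operation factors the adjacency matrix of $\mathcal L(G)$ through incidence matrices of $G$. Write $n=|V(G)|$, $m=|E(G)|$, let $A(\cdot)$ denote the adjacency matrix, put $D^+(G)=\mathrm{diag}(d^+_v(G))_{v\in V(G)}$ and $L(G)=D^+(G)-A(G)$, and fix the edge $e=ij$. Recall that the Matrix--Tree Theorem gives $\kappa_u(D)=\det(L(D)[\hat u,\hat u])$ for every digraph $D$ and vertex $u$, where $L(D)[\hat u,\hat u]$ is the principal submatrix obtained by deleting row and column $u$. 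First I would introduce the incidence matrices $H\in\{0,1\}^{m\times n}$ with $H_{fv}=1$ iff $h(f)=v$, and $T\in\{0,1\}^{n\times m}$ with $T_{vf}=1$ iff $t(f)=v$. A direct check gives $A(\mathcal L(G))=HT$, $A(G)=TH$, and $D^+(\mathcal L(G))=\Delta:=\mathrm{diag}(d^+_{h(f)}(G))_{f\in E(G)}$, since a vertex $f$ of $\mathcal L(G)$ has out-degree $d^+_{h(f)}(G)$. The hypothesis $d^+_u(G)d^-_u(G)>0$ makes every $d^+_v(G)>0$, so $\Delta$ is invertible; hence, writing $\Delta',H',T'$ for $\Delta,H,T$ with the $e$-row and/or $e$-column deleted, $\kappa_e(\mathcal L(G))=\det(\Delta'-H'T')$.

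The second step collapses the size from $m-1$ to $n$. Factoring out $\Delta'$ and applying Sylvester's identity $\det(I_{m-1}-(\Delta')^{-1}H'T')=\det(I_n-T'(\Delta')^{-1}H')$ gives $\kappa_e(\mathcal L(G))=\det(\Delta')\det(I_n-T'(\Delta')^{-1}H')$. A short computation shows $T'(\Delta')^{-1}H'=(A(G)-E_{ij})D^+(G)^{-1}$, where $E_{ij}$ is the matrix with a single $1$ in position $(i,j)$, and therefore $I_n-T'(\Delta')^{-1}H'=(L(G)+E_{ij})D^+(G)^{-1}$. Moreover $\det(\Delta')=\prod_{f\ne e}d^+_{h(f)}(G)=d^+_j(G)^{-1}\prod_{v\in V(G)}d^+_v(G)^{d^-_v(G)}$, since exactly $d^-_v(G)$ edges have head $v$ and $h(e)=j$. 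Once the $\prod_v d^+_v(G)$ factors cancel partially this yields
\begin{eqnarray*}
\kappa_e(\mathcal L(G))=\frac{1}{d^+_j(G)}\,\det(L(G)+E_{ij})\prod_{v\in V(G)}d^+_v(G)^{d^-_v(G)-1}.
\end{eqnarray*}

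It remains to evaluate $\det(L(G)+E_{ij})$ and to rewrite it using the $\kappa$'s. One verifies the matrix identity $L(G)+E_{ij}=L(G-e)+E_{ii}$; since $L(G-e)\mathbf 1=\mathbf 0$ we have $\det L(G-e)=0$, so expanding $\det(L(G-e)+E_{ii})$ by multilinearity in column $i$ leaves only the determinant of the matrix whose $i$-th column is the standard vector $\mathbf e_i$, and expanding along that column gives $\det(L(G-e)[\hat i,\hat i])=\det(L(G)[\hat i,\hat i])=\kappa_i(G)$ --- the last two equalities because $L(G-e)$ and $L(G)$ differ only in row $i$, and by the Matrix--Tree Theorem. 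Hence $\kappa_e(\mathcal L(G))=d^+_j(G)^{-1}\kappa_i(G)\prod_v d^+_v(G)^{d^-_v(G)-1}$. To reach the stated form I would use the identity $d^+_j(G)\kappa_j(G)=\sum_{g\in E(G):\,h(g)=j}\kappa_{t(g)}(G)$ (equivalently, the vector $(\kappa_u(G))_{u\in V(G)}$ is a left null vector of $L(G)$); this is exactly the stationarity relation for the walk with transition matrix $D^+(G)^{-1}A(G)$ mentioned in the Introduction, and it also has an elementary bijective proof. Isolating from the sum the term $\kappa_i(G)$ coming from the edge $e$ gives $\kappa_i(G)=d^+_j(G)\kappa_j(G)-\sum_{kj\in E(G),\,k\ne i}\kappa_k(G)$, and dividing by $d^+_j(G)$ produces the first formula. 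For the second formula, assume $d^+_u(G)=d^-_u(G)$ for every $u$: then $\prod_v d^+_v(G)^{d^-_v(G)-1}=\prod_v d^+_v(G)^{d^+_v(G)-1}$, and it remains to see that all the numbers $\kappa_u(G)$ coincide. Indeed a weakly connected balanced digraph is strongly connected, and then, since both $L(G)\mathbf 1=\mathbf 0$ and $\mathbf 1^\top L(G)=\mathbf 0^\top$ hold while $\mathrm{rank}\,L(G)=n-1$, the matrix $\mathrm{adj}\,L(G)$ has rank one with all entries equal, so $\kappa_u(G)=\det(L(G)[\hat u,\hat u])$ does not depend on $u$ (and if $G$ is disconnected, $\kappa_u(G)=0$ for all $u$); replacing $\kappa_i(G)$ by $\kappa_u(G)$ completes the argument.

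The incidence-matrix identities and the determinant manipulations are routine; there the only delicate point is tracking the exponent $d^-_v(G)-1$ and the factor $d^+_j(G)^{-1}$ correctly through the reduction. I expect the real content to be twofold. First, recognizing that Sylvester's identity is precisely the device transferring the computation from the $(m-1)$-dimensional Laplacian of $\mathcal L(G)$ to the $n$-dimensional Laplacian of $G$ --- setting up $H$ and $T$ so that $A(\mathcal L(G))=HT$ while $A(G)=TH$ is what makes this possible. Second, and the step that genuinely needs combinatorics, the identity $d^+_j(G)\kappa_j(G)=\sum_{h(g)=j}\kappa_{t(g)}(G)$: I would prove it by the bijection sending a pair consisting of an oriented spanning tree rooted at a vertex $v$ together with an edge $g$ from $v$ to $j$ to the digraph obtained by adjoining $g$, which is a functional digraph with a unique cycle, necessarily through $j$; that family of pairs and the family of pairs (oriented spanning tree rooted at $j$, outgoing edge of $j$) are then both in bijection with the set of functional digraphs having a unique cycle through $j$, which gives the identity.
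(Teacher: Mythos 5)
Your proof is correct, and it is worth noting that the paper does not actually prove Theorem \ref{thm1.1}: it is quoted from Knuth [13] (with combinatorial and bijective proofs attributed to [15] and [6]), and the paper's own contribution is the biclique-partition generalization in Theorem \ref{thm3.1}, recovered for line digraphs via Observation \ref{obs1.4}. Your argument and the paper's are close cousins but organized differently. You factor $A(\mathcal L(G))=HT$, $A(G)=TH$ through the head/tail incidence matrices and invoke Sylvester's identity $\det(I-XY)=\det(I-YX)$ to collapse the $(m-1)\times(m-1)$ minor to the $n\times n$ determinant $\det(L(G)+E_{ij})$, which the rank-one correction $E_{ij}$ then reduces to the single cofactor $\kappa_i(G)$; the paper instead builds a bipartite host digraph $H$ whose Laplacian has $L_G$ and $L_{\Omega(\varepsilon)}$ as its two Schur complements and uses Lemma \ref{lem2.5} to relate the corresponding minors. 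These are essentially the same linear-algebraic device in two guises. Both proofs then finish with the adjugate trick: your identity $d_j^+(G)\kappa_j(G)=\sum_{h(g)=j}\kappa_{t(g)}(G)$ is exactly the statement $(\kappa_1,\ldots,\kappa_n)L_G=0$ obtained from $\mathrm{adj}(L_G)L_G=0$, which is the same step as (3.4)--(3.5) in the paper's proof of Theorem \ref{thm3.1} (your proposed cycle-insertion bijection is a valid alternative proof of it). I checked the delicate bookkeeping: $\det(\Delta')=d_j^+(G)^{-1}\prod_v d_v^+(G)^{d_v^-(G)}$, the identity $L(G)+E_{ij}=L(G-e)+E_{ii}$, and the multilinearity expansion giving $\det(L(G)+E_{ij})=\kappa_i(G)$ are all right, and your intermediate formula $\kappa_e(\mathcal L(G))=d_j^+(G)^{-1}\kappa_i(G)\prod_v d_v^+(G)^{d_v^-(G)-1}$ agrees with the unweighted specialization of Levine's Theorem \ref{thm1.3}. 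The argument that $\kappa_u(G)$ is constant in the balanced case (via $\mathbf 1^\top L_G=0$ and the rank-one adjugate) correctly yields the second formula. What your direct route buys is an individual-root formula for $\kappa_e(\mathcal L(G))$ without passing through the general biclique machinery; what the paper's route buys is that the same computation applies verbatim to any biclique partition, not just the canonical one of a line digraph.
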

Let $\mathcal{E}(G)$ denote the number of Eulerian circuits of a digraph $G$. By Lemma \ref{lem2.2}, we can derive the following formula for counting Eulerian circuits of a line digraph from Theorem \ref{thm1.1}.
\begin{thm}\label{thm1.2}
\textup{[13, page 313]} Let $G$ be a digraph with $n$ vertices such that $d_u^+(G)=d_u^-(G)=d>0$ for each $u\in V(G)$. For any $e\in E(G)$ and $v\in V(G)$, we have
\begin{eqnarray*}
\kappa_e(\mathcal{L}(G))&=&d^{n(d-1)-1}\kappa_v(G),\\
\mathcal{E}(\mathcal{L}(G))&=&d^{-1}(d!)^{n(d-1)}((d-1)!)^{n}\kappa_v(G)=d^{-1}(d!)^{n(d-1)}\mathcal{E}(G).
\end{eqnarray*}
\end{thm}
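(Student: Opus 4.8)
The plan is to obtain both displayed identities by specializing Theorem~\ref{thm1.1} to the balanced $d$-regular case and then invoking Lemma~\ref{lem2.2}. We may assume $G$ is connected: otherwise $\kappa_v(G)=0$ and, since $d>0$ forces every vertex to lie on an edge, $\mathcal{L}(G)$ is disconnected as well, so all quantities in the statement are $0$. Because $d_u^+(G)=d_u^-(G)=d$ for every $u$, the digraph $G$ is balanced, hence strongly connected; consequently $\kappa_u(G)$ does not depend on $u$ (by the Matrix-tree theorem; alternatively, in the second formula of Theorem~\ref{thm1.1} only the factor $\kappa_u(G)$ depends on $u$ while the left-hand side does not), and we denote this common value by $\kappa_v(G)$.

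First I would prove $\kappa_e(\mathcal{L}(G))=d^{n(d-1)-1}\kappa_v(G)$. Writing $e=ij$, we have $d_j^+(G)^{-1}=d^{-1}$, and since $d_v^+(G)=d$ at each of the $n$ vertices, $\prod_{v\in V(G)}d_v^+(G)^{d_v^+(G)-1}=d^{n(d-1)}$. Substituting these into the second displayed formula of Theorem~\ref{thm1.1} yields $\kappa_e(\mathcal{L}(G))=d^{-1}\cdot d^{n(d-1)}\cdot\kappa_v(G)=d^{n(d-1)-1}\kappa_v(G)$.

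Next I would treat the Eulerian-circuit counts. From the definition of the line digraph, $\mathcal{L}(G)$ has $|E(G)|=nd$ vertices, and for $e=ij\in E(G)$ its out-degree in $\mathcal{L}(G)$ is $d_j^+(G)=d$ while its in-degree is $d_i^-(G)=d$; thus $\mathcal{L}(G)$ is again balanced with all in- and out-degrees equal to $d$, and it is strongly connected because $G$ is. Applying Lemma~\ref{lem2.2} to $\mathcal{L}(G)$ and to $G$ gives $\mathcal{E}(\mathcal{L}(G))=\kappa_e(\mathcal{L}(G))\prod_{f\in V(\mathcal{L}(G))}(d_f^-(\mathcal{L}(G))-1)!=\kappa_e(\mathcal{L}(G))\,((d-1)!)^{nd}$ and $\mathcal{E}(G)=\kappa_v(G)\,((d-1)!)^{n}$. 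Plugging in the value of $\kappa_e(\mathcal{L}(G))$ just obtained and simplifying via $(d!)^{n(d-1)}=d^{n(d-1)}((d-1)!)^{n(d-1)}$ together with $nd=n(d-1)+n$ turns the first expression into $d^{-1}(d!)^{n(d-1)}((d-1)!)^{n}\kappa_v(G)$; finally replacing $\kappa_v(G)\,((d-1)!)^{n}$ by $\mathcal{E}(G)$ produces the last equality $\mathcal{E}(\mathcal{L}(G))=d^{-1}(d!)^{n(d-1)}\mathcal{E}(G)$.

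I do not anticipate a genuine obstacle: once Theorem~\ref{thm1.1} and Lemma~\ref{lem2.2} are in hand the proof is pure bookkeeping. The only points that need a little care are the exponent arithmetic in the simplification and the two structural observations about $\mathcal{L}(G)$ — that it has $nd$ vertices all of in- and out-degree $d$, and that it inherits strong connectedness from $G$ — both immediate from the definition of the line digraph.
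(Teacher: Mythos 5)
Your proposal is correct and follows exactly the route the paper indicates for this result: specialize the second formula of Theorem~\ref{thm1.1} to the $d$-in-$d$-out-regular case to get $\kappa_e(\mathcal{L}(G))=d^{n(d-1)-1}\kappa_v(G)$, then apply Lemma~\ref{lem2.2} to both $\mathcal{L}(G)$ (which has $nd$ vertices of out-degree $d$) and $G$, and the exponent bookkeeping via $(d!)^{n(d-1)}=d^{n(d-1)}((d-1)!)^{n(d-1)}$ checks out. The only cosmetic slip is writing $d_f^-(\mathcal{L}(G))$ where Lemma~\ref{lem2.2} uses out-degrees, which is harmless here since the line digraph is balanced.
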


Let $\{w_i(G)\}_{i\in V(G)}$ be indeterminates on $V(G)$. If each edge $\{u,v\}$ in $G$ has weight $w_{uv}(G)=w_v(G)$, then we say that \textit{the weights of $G$ are induced by} $\{w_i(G)\}_{i\in V(G)}$.

Levine \cite{Levine} proved the following formula for spanning tree enumerators of a line digraph, which is a generalization of Theorem \ref{thm1.1}.
\begin{thm}\label{thm1.3}
\textup{[14, Theorem 2.3]} Let $G$ be a weighted digraph such that $d_u^-(G)>0$ for each $u\in V(G)$. For any edge $e=ij$ of $G$ satisfying $d_j^-(G)\geq2$, we have
\begin{eqnarray*}
t_e(\mathcal{L}(G))=w_e(G)t_i(G)d_j(G)^{d_j^-(G)-2}\prod_{v\in V(G)\atop v\neq j}d_v(G)^{d_v^-(G)-1},
\end{eqnarray*}
where the weights of $\mathcal{L}(G)$ are induced by indeterminates $\{w_e(G)\}_{e\in V(G)}$, $d_v(G)=\sum_{vu\in E(G)}w_{vu}(G)$.
\end{thm}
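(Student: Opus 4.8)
The plan is to evaluate both sides via the weighted directed Matrix--Tree Theorem \cite{Chaiken,Tutte}: for a weighted digraph $H$, if $L(H)=\Delta(H)-A(H)$ is its out-degree Laplacian (with $\Delta(H)$ the diagonal matrix of weighted out-degrees and $A(H)_{ab}$ the weight of the arc $a\to b$), then $t_u(H)=\det L(H)[\widehat u,\widehat u]$, where $[\widehat u,\widehat u]$ indicates deletion of the row and column indexed by $u$; this counts exactly the oriented spanning trees rooted at $u$ in the sense of the Introduction (all arcs directed towards the root). It is convenient to regard the weights $\{w_f(G)\}_{f\in E(G)}$ as independent indeterminates and work over $\mathbb{Q}(\{w_f(G)\})$, so that every $d_v(G)$ is invertible; since both sides of the asserted identity are polynomials in the $w_f(G)$, this costs nothing essential, and the hypothesis $d_j^-(G)\ge 2$ together with the stated exponents is exactly what keeps the formula a genuine polynomial identity.

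First I would factor $L(\mathcal L(G))$ through incidence matrices of $G$. Let $B,C$ be the $V(G)\times E(G)$ matrices with $B_{v,f}=1$ if $t(f)=v$ and $C_{v,f}=1$ if $h(f)=v$ (and $0$ otherwise), and let $W=\operatorname{diag}(w_f(G))_{f\in E(G)}$. Since the arc $f\to g$ of $\mathcal L(G)$ exists precisely when $h(f)=t(g)$ and carries induced weight $w_g(G)$, one checks that $A(\mathcal L(G))=C^{\top}BW$ and that vertex $f$ of $\mathcal L(G)$ has weighted out-degree $d_{h(f)}(G)$, so $\Delta(\mathcal L(G))=D:=\operatorname{diag}(d_{h(f)}(G))_{f\in E(G)}$ and $L(\mathcal L(G))=D-C^{\top}BW$. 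Likewise $BWD^{-1}C^{\top}=A(G)D_V^{-1}$, where $D_V:=\operatorname{diag}(d_v(G))_{v\in V(G)}$ and $L(G)=D_V-A(G)$.

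Now fix the arc $e=ij$, so $j=h(e)$, and consider the square block matrix
\[
\mathcal M_e=\begin{pmatrix} D[\widehat e,\widehat e] & C^{\top}[\widehat e,\,\cdot\,]\\ (BW)[\,\cdot\,,\widehat e] & I\end{pmatrix},
\]
where subscripts indicate deletion of the row and/or column indexed by $e$. Taking the Schur complement of the $I$-block gives $\det\mathcal M_e=\det\bigl(D[\widehat e,\widehat e]-(C^{\top}BW)[\widehat e,\widehat e]\bigr)=\det L(\mathcal L(G))[\widehat e,\widehat e]=t_e(\mathcal L(G))$. Taking instead the Schur complement of the invertible block $D[\widehat e,\widehat e]$ gives $\det\mathcal M_e=\det D[\widehat e,\widehat e]\cdot\det\bigl(I-(BW)[\,\cdot\,,\widehat e]\,D[\widehat e,\widehat e]^{-1}\,C^{\top}[\widehat e,\,\cdot\,]\bigr)$. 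Here $\det D[\widehat e,\widehat e]=\det D/d_j(G)=d_j(G)^{d_j^-(G)-1}\prod_{v\ne j}d_v(G)^{d_v^-(G)}$, while the $V(G)\times V(G)$ matrix $(BW)[\,\cdot\,,\widehat e]D[\widehat e,\widehat e]^{-1}C^{\top}[\widehat e,\,\cdot\,]$ differs from $BWD^{-1}C^{\top}=A(G)D_V^{-1}$ only by removing the contribution of the single arc $e$, namely it equals $A(G)D_V^{-1}-\tfrac{w_e(G)}{d_j(G)}\mathbf e_i\mathbf e_j^{\top}$. Hence, using $\mathbf e_j^{\top}D_V^{-1}=d_j(G)^{-1}\mathbf e_j^{\top}$,
\[
I-(BW)[\,\cdot\,,\widehat e]D[\widehat e,\widehat e]^{-1}C^{\top}[\widehat e,\,\cdot\,]=L(G)D_V^{-1}+\tfrac{w_e(G)}{d_j(G)}\mathbf e_i\mathbf e_j^{\top}=\bigl(L(G)+w_e(G)\,\mathbf e_i\mathbf e_j^{\top}\bigr)D_V^{-1},
\]
so its determinant is $\det\bigl(L(G)+w_e(G)\mathbf e_i\mathbf e_j^{\top}\bigr)\big/\prod_{v}d_v(G)$.

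It remains to evaluate $\det\bigl(L(G)+w_e(G)\mathbf e_i\mathbf e_j^{\top}\bigr)$. Because $L(G)_{ij}=-w_{ij}(G)=-w_e(G)$, adding $w_e(G)\mathbf e_i\mathbf e_j^{\top}$ simply replaces the $(i,j)$ entry of $L(G)$ by $0$; the resulting matrix equals $L(G-e)+w_e(G)\mathbf e_i\mathbf e_i^{\top}$, where $G-e$ is $G$ with the arc $e$ deleted. Since $L(G-e)$ is an out-degree Laplacian it is singular, so the matrix determinant lemma gives $\det\bigl(L(G-e)+w_e(G)\mathbf e_i\mathbf e_i^{\top}\bigr)=w_e(G)\,[\operatorname{adj}L(G-e)]_{ii}=w_e(G)\det L(G-e)[\widehat i,\widehat i]=w_e(G)\,t_i(G-e)$, and $t_i(G-e)=t_i(G)$ since no oriented spanning tree rooted at $i$ uses an out-arc of $i$, in particular not $e$. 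Combining the three evaluations,
\begin{align*}
t_e(\mathcal L(G)) &= d_j(G)^{d_j^-(G)-1}\Bigl(\prod_{v\ne j}d_v(G)^{d_v^-(G)}\Bigr)\cdot\frac{w_e(G)\,t_i(G)}{\prod_{v}d_v(G)}\\
&= w_e(G)\,t_i(G)\,d_j(G)^{d_j^-(G)-2}\prod_{v\ne j}d_v(G)^{d_v^-(G)-1},
\end{align*}
as claimed. The substantive point is the choice of bordered matrix $\mathcal M_e$, so that its two Schur complements produce on one side the minor of $L(\mathcal L(G))$ and on the other a minor of $L(G)$; after that everything is bookkeeping — getting $A(\mathcal L(G))=C^{\top}BW$ and $BWD^{-1}C^{\top}=A(G)D_V^{-1}$ right for the induced weights, and checking that deleting the row/column indexed by $e$ corresponds to removing precisely the arc $e$ — except for the pleasant observation that the rank-one correction $w_e(G)\mathbf e_i\mathbf e_j^{\top}$ exactly kills the $(i,j)$ entry of $L(G)$, which is what collapses the $V(G)\times V(G)$ determinant to $w_e(G)\,t_i(G)$.
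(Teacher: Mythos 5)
Your proof is correct, but it reaches Theorem \ref{thm1.3} by a genuinely different mechanism than the paper does. The paper never proves this statement directly: it is quoted from Levine [14], and the paper's own route is to prove the biclique-partition generalization (Theorem \ref{thm3.1}(1)) and then specialize via Observation \ref{obs1.4}, under which the natural biclique partition of $\mathcal{L}(G)$ has biclique digraph $\Omega(\varepsilon)\cong G$ and the sum $\sum_{Q_i:\,e\in Q_i^{(2)}}$ collapses to the single term $Q_{t(e)}$. The shared philosophy is the same --- realize $L_G$ and $L_{\mathcal{L}(G)}$ (or $L_{\Omega(\varepsilon)}$) as the two Schur complements of one bordered matrix --- but the engines differ: the paper keeps the full \emph{singular} bipartite Laplacian $L_H=\bigl(\begin{smallmatrix}D_1&-R_\varepsilon\\-S_\varepsilon&D_2\end{smallmatrix}\bigr)$, evaluates its principal minors by Lemma \ref{lem2.5}, and extracts the identity from the adjugate relation $\operatorname{adj}(L_H)L_H=0$ together with $(\operatorname{adj}L_H)_{uv}=t_v(H)$; you instead delete the index $e$ from the border first, obtaining a \emph{nonsingular} matrix $\mathcal M_e$ whose two Schur-complement factorizations are equated, at the cost of the rank-one discrepancy $w_e(G)\mathbf e_i\mathbf e_j^{\top}$, which you dispatch with the matrix determinant lemma and the observation $t_i(G-e)=t_i(G)$. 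Your version is more self-contained for the line-digraph statement and yields each $t_e(\mathcal{L}(G))$ by a single determinant evaluation; the paper's version avoids the rank-one correction entirely and generalizes immediately to arbitrary biclique partitions. One small point to patch: your genericity argument assumes every $d_v(G)$ is a nonzero element of $\mathbb{Q}(\{w_f(G)\})$, which requires $d_v^+(G)>0$ for all $v$, whereas the theorem only hypothesizes $d_u^-(G)>0$; if some vertex has out-degree $0$ you should note separately (as the paper does in the analogous situation in Corollary 3.2) that both sides of the identity vanish.
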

A \textit{biclique} \cite{Gregory} is a bipartite digraph $Q$ whose vertices can be partitioned into two parts $Q^{(1)}$ and $Q^{(2)}$, and $E(Q)=\{ij:i\in Q^{(1)},j\in Q^{(2)}\}$. A \textit{biclique partition} of a digraph $G$ is a set $\varepsilon=\{Q_1,\ldots,Q_r\}$ of bicliques in $G$ such that each edge of $G$ belongs to exactly one biclique of $\varepsilon$. For a biclique partition $\varepsilon=\{Q_1,\ldots,Q_r\}$ of $G$, its \textit{biclique digraph} has vertex set $\varepsilon=\{Q_1,\ldots,Q_r\}$ and edge set $\{Q_iQ_j:Q_i^{(2)}\cap Q_j^{(1)}\neq\emptyset\}$.
\begin{obs}\label{obs1.4}
For a vertex $i$ of a digraph $G$, the edge sets $Q_i^{(1)}=\{e:e\in E(G),h(e)=i\}$ and $Q_i^{(2)}=\{f:f\in E(G),t(f)=i\}$ form a biclique in the line digraph $\mathcal{L}(G)$, and all such bicliques form a natural biclique partition $\varepsilon=\{Q_i:i\in V(G),d_i^+(G)d_i^-(G)>0\}$ of $\mathcal{L}(G)$. Notice that $Q_i^{(2)}\cap Q_j^{(1)}\neq\emptyset$ if and only if $ij\in E(G)$. So the biclique digraph of $\varepsilon$ is isomorphic to $G$ when $d_u^+(G)d_u^-(G)>0$ for each $u\in V(G)$.
\end{obs}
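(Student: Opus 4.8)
The plan is to verify the four assertions of the observation directly from the definitions of line digraph, biclique, biclique partition, and biclique digraph; no machinery beyond definition-chasing is needed.

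First I would show that $Q_i=(Q_i^{(1)},Q_i^{(2)})$ is a biclique of $\mathcal{L}(G)$. By the definition of $\mathcal{L}(G)$ there is an edge from $e$ to $f$ iff $h(e)=t(f)$; hence if $e\in Q_i^{(1)}$ and $f\in Q_i^{(2)}$ then $h(e)=i=t(f)$, so $ef\in E(\mathcal{L}(G))$, and conversely every edge of $\mathcal{L}(G)$ from a vertex of $Q_i^{(1)}$ to a vertex of $Q_i^{(2)}$ is of this form. Assuming $G$ has no loops, $Q_i^{(1)}\cap Q_i^{(2)}=\emptyset$ (a common element would be a loop at $i$), so $Q_i^{(1)},Q_i^{(2)}$ is a genuine bipartition and $Q_i$ is a biclique.

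Next, for the claim that $\varepsilon$ is a biclique partition: given an arbitrary edge $ef$ of $\mathcal{L}(G)$, put $i:=h(e)=t(f)$; then $ef\in E(Q_i)$, and since $i$ is determined by $ef$ this edge lies in no $Q_j$ with $j\neq i$. Thus the sets $E(Q_i)$ partition $E(\mathcal{L}(G))$. Moreover $Q_i$ contains an edge iff $Q_i^{(1)}\neq\emptyset$ and $Q_i^{(2)}\neq\emptyset$, that is, iff $d_i^-(G)>0$ and $d_i^+(G)>0$; discarding the empty bicliques leaves exactly $\varepsilon=\{Q_i:i\in V(G),\,d_i^+(G)d_i^-(G)>0\}$.

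Finally, I would note that $f\in Q_i^{(2)}\cap Q_j^{(1)}$ means $t(f)=i$ and $h(f)=j$, i.e. $f$ is an edge of $G$ from $i$ to $j$, and such an $f$ exists iff $ij\in E(G)$; this is the asserted equivalence. Hence the biclique digraph of $\varepsilon$ has vertex set $\{Q_i:d_i^+(G)d_i^-(G)>0\}$ and edge set $\{Q_iQ_j:ij\in E(G)\}$. When $d_u^+(G)d_u^-(G)>0$ for every $u\in V(G)$, the map $i\mapsto Q_i$ is a bijection $V(G)\to\varepsilon$ (injective since any $e\in Q_i^{(1)}$ recovers $i=h(e)$, and $Q_i^{(1)}\neq\emptyset$), and it carries $ij\in E(G)$ to $Q_iQ_j$ and back, so it is an isomorphism of digraphs. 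The only points needing care are the degenerate cases — loops of $G$ (excluded above, so that the two parts of each $Q_i$ stay disjoint) and vertices of in- or out-degree $0$ (which give empty bicliques, simply omitted from $\varepsilon$) — together with the tacit assumption that $G$ has no repeated edges, so that the simple biclique digraph matches $G$ edge for edge. Mathematically there is no real obstacle; the content is entirely in tracing through the definitions.
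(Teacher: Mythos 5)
Your proposal is correct and follows the same definition-chasing route the paper intends: the paper offers no separate proof of Observation \ref{obs1.4}, justifying it inline by exactly the equivalence $Q_i^{(2)}\cap Q_j^{(1)}\neq\emptyset \iff ij\in E(G)$ that you verify. Your explicit attention to the degenerate cases (loops, multiple edges, vertices of in- or out-degree zero) is a reasonable sharpening of the paper's implicit assumptions rather than a different argument.
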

It is known that every digraph $G$ has a biclique partition $\varepsilon$ satisfying $|\varepsilon|\leq|V(G)|$ ($|\varepsilon|$ is much smaller than $|V(G)|$ for many digraphs). So we can get reduction formulas for spanning tree enumerators, stationary distribution vector and Kemeny's constant of $G$ by counting oriented spanning trees in the biclique digraph of $\varepsilon$. Moreover, based on Observation \ref{obs1.4}, it is natural to use biclique partitions to extend the results of Knuth \cite{Knuth} and Levine \cite{Levine} from line digraphs to general digraphs.

In Section 2, we give some basic definitions, notations and auxiliary lemmas. In Section 3, we give biclique partition formulas for counting oriented spanning trees and Eulerian circuits of digraphs, which generalize Theorems 1.1-1.3 to general digraphs. In Section 4, we give biclique partition formulas for stationary distribution vector and Kemeny's constant of digraphs. In Section 5, we give some concluding remarks, including more general spanning tree identity in digraphs, and the method for enumerating spanning trees of undirected graphs by vertex degrees and biclique partitions.

\section{Preliminaries}
Let $G$ be a weighted digraph on $n$ vertices, and each edge $e=ij\in E(G)$ is weighted by an indeterminate $w_{ij}(G)$. The \textit{weighted degree} of a vertex $i$ is $d_i(G)=\sum_{ij\in E(G)}w_{ij}(G)$. The \textit{Laplacian matrix} $L_G$ is the $n\times n$ matrix with entries
\begin{eqnarray*}
(L_G)_{ij}=\begin{cases}d_i(G)~~~~~~~~~~~~\mbox{if}~i=j,\\
-w_{ij}(G)~~~~~~~~\mbox{if}~ij\in E(G),\\
0~~~~~~~~~~~~~~~~~\mbox{otherwise}.\end{cases}
\end{eqnarray*}

Let $A(i,j)$ denote the submatrix of a matrix $A$ obtained by deleting the $i$-th row and the $j$-th column, and let $\det(A)$ denote the determinant of a square matrix $A$. The following lemma follows from the all minors matrix tree theorem \cite{Chaiken}.
\begin{lem}\label{lem2.1}
\textup{\cite{Chaiken}} Let $G$ be a weighted digraph with $n$ vertices. For any $i,j\in\{1,\ldots,n\}$, we have
\begin{eqnarray*}
\det(L_G(i,j))=(-1)^{i+j}t_i(G).
\end{eqnarray*}
\end{lem}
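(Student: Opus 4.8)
The plan is to deduce the statement from the all minors matrix tree theorem of Chaiken together with the zero row sum property of $L_G$, which lets us reduce everything to the principal minor case $i=j$.

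First I would record the single index matrix tree theorem $\det(L_G(i,i))=t_i(G)$. Writing out the determinant by the permutation formula and expanding each diagonal entry $d_k(G)=\sum_{ku\in E(G)}w_{ku}(G)$, each nonvanishing term corresponds to a choice of one out-edge at every vertex $k\in V(G)\setminus\{i\}$, i.e. to a functional digraph $f\colon V(G)\setminus\{i\}\to V(G)$; a standard sign-reversing involution cancels all configurations whose functional digraph contains a cycle (every cycle avoids $i$, since $i$ carries no out-edge), and the surviving terms are exactly the oriented spanning trees with root $i$, each contributing $+\prod_{e\in E(T)}w_e(G)$. Alternatively one simply invokes Chaiken's theorem with index sets $\{i\}$ and $\{i\}$.

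Next I would treat $j\neq i$ using that $L_G$ has zero row sums, i.e. $L_G\mathbf 1=\mathbf 0$. Deleting row $i$ gives an $(n-1)\times n$ matrix $M$ whose columns $c_1,\dots,c_n$ satisfy $\sum_{k=1}^n c_k=\mathbf 0$. Now $\det(L_G(i,j))$ is the determinant of the matrix with columns $\{c_k:k\neq j\}$; replacing the column $c_i$ in it by $\sum_{k\neq j}c_k=-c_j$ does not change the determinant, after which the columns are $\{c_k:k\neq i,j\}$ together with $-c_j$ occupying the slot of the former column $i$. Comparing with $\det(L_G(i,i))=\det[\,c_k:k\neq i\,]$ and reordering columns to match, one obtains $(-1)^j\det(L_G(i,j))=(-1)^i\det(L_G(i,i))$, and combining this with the first step yields $\det(L_G(i,j))=(-1)^{i+j}t_i(G)$.

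The point requiring care is the sign bookkeeping in the last step: one must track the parity of the permutation that transports the replaced column into its correct position so that the factor $(-1)^{i+j}$ emerges, and one must check that the convention placing weighted out-degrees on the diagonal of $L_G$ is the one matching the ``root has out-degree $0$'' convention in the definition of $t_i(G)$. Everything else is routine; in fact the whole statement is the $|I|=|J|=1$ specialization of Chaiken's theorem, and the zero row sums of $L_G$ are precisely what makes the off-diagonal minors collapse to the same enumerator $t_i(G)$ as the principal one, up to the sign $(-1)^{i+j}$.
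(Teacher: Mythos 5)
Your proof is correct, but note that the paper does not actually prove this lemma: it simply cites it as the $1\times 1$ minor specialization of Chaiken's all minors matrix tree theorem, in which the sign $(-1)^{i+j}$ and the independence of the minor from the deleted column index $j$ are already built in. Your route is more self-contained and more elementary: you establish the principal case $\det(L_G(i,i))=t_i(G)$ by the standard permutation-expansion and sign-reversing involution on functional digraphs (each cycle contributes a net factor $-1$, so the inclusion--exclusion over subsets of cycles kills every configuration except the acyclic ones, which are exactly the arborescences rooted at $i$), and you then transfer to the off-diagonal minors purely from the zero row sum identity $L_G\mathbf 1=\mathbf 0$ via the column replacement $c_i\mapsto\sum_{k\neq j}c_k=-c_j$. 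Your sign bookkeeping checks out: the replacement costs a factor $-1$ and the $j-i-1$ adjacent transpositions needed to slide $c_j$ back to its natural slot cost $(-1)^{j-i-1}$, giving $(-1)^{j-i}=(-1)^{i+j}$ overall. What your approach buys is that one never needs the full strength (or the delicate sign conventions) of the all minors theorem for general index sets; what the paper's citation buys is brevity and a single reference that also covers the larger minors it does not need here. Both the out-degree-on-the-diagonal convention and the root-has-out-degree-$0$ convention are consistent in the paper, as you flag, so there is no gap.
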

The following is a fomula for counting Eulerian circuits of digraphs.
\begin{lem}\label{lem2.2}
\textup{\cite{Aardenne,Smith}} Let $G$ be a Eulerian digraph. For any $u\in V(G)$, we have
\begin{eqnarray*}
\mathcal{E}(G)=\kappa_u(G)\prod_{v\in V(G)}(d_v^+(G)-1)!.
\end{eqnarray*}
\end{lem}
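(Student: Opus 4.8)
The plan is to prove Lemma~\ref{lem2.2}, which is the classical BEST theorem, by the standard bijection between Eulerian circuits of $G$ and pairs consisting of an oriented spanning tree of $G$ together with a local ordering of the out-edges at each vertex. Fix a vertex $u$ and one out-edge $e^\ast$ at $u$; since every Eulerian circuit traverses $e^\ast$ exactly once, each one has a unique representation as a closed walk $C=(u,e^\ast,\ldots,u)$ that starts and ends at $u$ with $e^\ast$ first, and $\mathcal{E}(G)$ equals the number of such walks. To each $C$ I would associate: (i) for every vertex $v$, the total order $\sigma_v$ in which $C$ traverses the $d_v^+(G)$ out-edges at $v$; and (ii) the edge set $T$ that selects, for each $v\neq u$, the out-edge of $v$ traversed \emph{last} by $C$.

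First I would verify $T\in\mathbb{T}_u(G)$: by construction $T$ has exactly one out-edge at each $v\neq u$ and none at $u$, so $|E(T)|=|V(G)|-1$ and only acyclicity remains. If $T$ contained a directed cycle $v_1\to v_2\to\cdots\to v_k\to v_1$, choose the indexing so that the $T$-edge leaving $v_1$ occurs earliest in $C$ among these $k$ edges; then $C$ later enters $v_1$ along the $T$-edge leaving $v_k$, and since the walk halts only at $u\neq v_1$ it must subsequently leave $v_1$ again, contradicting that the chosen edge is the last exit from $v_1$. Hence $C\mapsto(T,\{\sigma_v\}_{v})$ is well defined, and its image consists exactly of the pairs in which $\sigma_u$ begins with $e^\ast$ and, for every $v\neq u$, $\sigma_v$ ends with the unique $T$-edge at $v$.

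The main step is to show this map is a bijection onto that image, which I would do by exhibiting an explicit inverse: given such a pair, grow a walk from $u$ by repeatedly leaving the current vertex $v$ along the first of its out-edges not yet used with respect to $\sigma_v$, stopping when stuck. Counting entrances and exits at each vertex and using $d_v^+(G)=d_v^-(G)$ shows the walk can get stuck only at $u$, and at that point all out-edges at $u$, hence (again by the balance of degrees) all in-edges at $u$, have been used, so $u$ is isolated in the subdigraph $H$ of still-unused edges; furthermore $H$ is balanced at every vertex. If $H$ were nonempty, pick $v\in V(H)$ with $d_v^+(H)\geq1$; since $\sigma_v$ ends with the $T$-edge at $v$, that edge is still unused, so its head also lies in $V(H)$, and iterating along $T$ forces the entire $T$-path from $v$ to $u$ to stay inside $V(H)$, contradicting $u\notin V(H)$. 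Thus the reconstructed walk uses every edge and returns to $u$, i.e.\ it is an Eulerian circuit, and one checks directly that it has out-edge orders $\{\sigma_v\}$ and last-exit tree $T$, so the two maps are mutually inverse. I expect the main obstacle to be precisely this last-exit acyclicity together with the ``no leftover edges'' accounting; once the bijection is in hand the count is routine --- $T$ ranges over $\mathbb{T}_u(G)$, giving $\kappa_u(G)$ choices; $\sigma_u$ ranges over orderings with $e^\ast$ first, giving $(d_u^+(G)-1)!$ choices; and for each $v\neq u$, $\sigma_v$ ranges over orderings with the $T$-edge last, giving $(d_v^+(G)-1)!$ choices --- so $\mathcal{E}(G)=\kappa_u(G)\prod_{v\in V(G)}(d_v^+(G)-1)!$, and in particular this number does not depend on $u$.
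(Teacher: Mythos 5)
Your argument is correct: it is the standard bijective proof of the BEST theorem, matching the approach of the cited sources (van Aardenne-Ehrenfest--de Bruijn and Tutte--Smith). The paper itself offers no proof of Lemma~\ref{lem2.2}, quoting it as a known result, so there is nothing internal to compare against; your write-up correctly handles the three genuinely delicate points --- acyclicity of the last-exit tree, the degree-balance accounting showing the reconstruction walk can only halt at $u$, and the $T$-path argument ruling out leftover edges --- and the final count $\kappa_u(G)\prod_{v\in V(G)}(d_v^+(G)-1)!$ follows as you state.
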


Let $G$ be a strongly connected weighted digraph with $n$ vertices, and all weights of $G$ are positive. The \textit{transition probability matrix} $P_G$ of $G$ is the $n\times n$ matrix with entries
\begin{eqnarray*}
(P_G)_{ij}=\begin{cases}w_{ij}(G)d_i(G)^{-1}~~~~~~~\mbox{if}~ij\in E(G),\\
0~~~~~~~~~~~~~~~~~~~~~~~~\mbox{if}~ij\notin E(G).\end{cases}
\end{eqnarray*}
A random walk on $G$ is defined by $P_G$, that is, $(P_G)_{ij}$ denotes the probability of moving from vertex $i$ to vertex $j$. Notice that $P_G$ is an irreducible nonnegative matrix with spectral radius $1$, and the all-ones vector is a right eigenvector for the eigenvalue $1$. By the Perron-Frobenius theorem, there exists a unique positive vector $\pi(G)=(\pi_1(G),\ldots,\pi_n(G))^\top$ such that $\pi(G)^\top P_G=\pi(G)^\top$ and $\sum_{i=1}^n\pi_i(G)=1$. Such vector $\pi(G)$ is called the \textit{stationary distribution vector} \cite{Beveridge} of $G$.
\begin{lem}\label{lem2.3}
\textup{[12, page 81]} Let $G$ be a strongly connected weighted digraph with $n$ vertices, and all weights of $G$ are positive. Then
\begin{eqnarray*}
\pi_i(G)=\frac{\det(I-P_G(i,i))}{\sum_{j=1}^n\det(I-P_G(j,j))},~i=1,\ldots,n.
\end{eqnarray*}
\end{lem}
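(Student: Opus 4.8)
The plan is to study the singular matrix $M=I-P_G$ through its adjugate (classical adjoint) matrix $\operatorname{adj}(M)$. First I would record the relevant kernels: since $P_G$ is row-stochastic, $M\mathbf{1}=\mathbf{0}$ where $\mathbf 1$ is the all-ones vector, so $M$ is singular; since $P_G$ is irreducible and nonnegative with spectral radius $1$, the Perron--Frobenius theorem forces $1$ to be a simple eigenvalue of $P_G$, hence $\operatorname{rank}(M)=n-1$. Thus the right kernel of $M$ is $\operatorname{span}\{\mathbf{1}\}$ and the left kernel is $\operatorname{span}\{\pi(G)\}$, the latter being exactly the defining relation $\pi(G)^\top P_G=\pi(G)^\top$.

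Next I would exploit the identities $\operatorname{adj}(M)\,M=M\,\operatorname{adj}(M)=\det(M)I=0$. The first identity shows that every row of $\operatorname{adj}(M)$ is a left null vector of $M$, hence a scalar multiple of $\pi(G)^\top$; the second shows that every column of $\operatorname{adj}(M)$ is a right null vector of $M$, hence a scalar multiple of $\mathbf{1}$. Comparing entries, and using $\pi_i(G)>0$, these two facts together force $\operatorname{adj}(M)=c\,\mathbf{1}\,\pi(G)^\top$ for a single scalar $c$. Because $\operatorname{rank}(M)=n-1$, the matrix $\operatorname{adj}(M)$ is nonzero, so $c\neq0$.

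Finally I would compare diagonal entries. By the definition of the adjugate, $(\operatorname{adj}(M))_{ii}=(-1)^{i+i}\det(M(i,i))=\det(I-P_G(i,i))$, while the rank-one form gives $(\operatorname{adj}(M))_{ii}=c\,\pi_i(G)$. Hence $\det(I-P_G(i,i))=c\,\pi_i(G)$ for every $i$. Summing over $i$ and using $\sum_{i=1}^n\pi_i(G)=1$ gives $\sum_{j=1}^n\det(I-P_G(j,j))=c\neq0$, and dividing the two displayed relations yields the asserted formula.

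The argument is mostly bookkeeping once the rank-one structure of $\operatorname{adj}(I-P_G)$ is established, so I do not anticipate a genuine difficulty; the single point demanding care is the justification that $\operatorname{rank}(I-P_G)=n-1$ (equivalently $c\neq0$), which is precisely where the irreducibility of $P_G$ and the simplicity of the Perron eigenvalue enter. As an alternative route, one can write $I-P_G=D^{-1}L_G$ with $D=\operatorname{diag}(d_1(G),\dots,d_n(G))$, so that $\det(I-P_G(i,i))=\big(\prod_{k\neq i}d_k(G)\big)^{-1}t_i(G)$ by Lemma \ref{lem2.1}; combined with the spanning-tree description of $\pi(G)$ recalled in the introduction, this gives the same identity.
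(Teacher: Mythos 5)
Your argument is correct, but note that there is no proof in the paper to compare it against: Lemma \ref{lem2.3} is quoted from Kirkland [12, page 81] and used as a black box. Your adjugate computation is a complete, self-contained derivation modulo the Perron--Frobenius facts the paper already invokes when defining $\pi(G)$: the two identities $\operatorname{adj}(I-P_G)(I-P_G)=(I-P_G)\operatorname{adj}(I-P_G)=0$ force every row of $\operatorname{adj}(I-P_G)$ into the left kernel $\operatorname{span}\{\pi(G)^\top\}$ and every column into the right kernel $\operatorname{span}\{\mathbf{1}\}$, giving $\operatorname{adj}(I-P_G)=c\,\mathbf{1}\,\pi(G)^\top$; the crucial nonvanishing $c\neq 0$ is exactly $\operatorname{rank}(I-P_G)=n-1$, i.e.\ simplicity of the Perron eigenvalue; and the diagonal entries of the adjugate are the principal minors $\det(I-P_G(i,i))$, so summing and normalizing by $\sum_i\pi_i(G)=1$ finishes it. This is very much in the spirit of the techniques the paper itself deploys: the proofs of Theorems \ref{thm3.1} and \ref{thm4.1} run the same adjoint-matrix identity $XL_H=\det(L_H)I=0$ on a bipartite Laplacian and read off enumerators from the rows of $X$ via Lemma \ref{lem2.1}. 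Your closing alternative via $I-P_G=D^{-1}L_G$ is fine as a consistency check, but it leans on the spanning-tree description of $\pi(G)$ that the introduction only cites, so the adjugate route is the one to keep.
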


Let $m_{ij}$ be the mean first passage time from vertex $i$ to vertex $j$, then the value $\mathcal{K}(G)=\sum_{j\in V(G),j\neq i}m_{ij}\pi_j(G)$ (does not depend on $i$) is called the \textit{Kemeny's constant} of $G$.
\begin{lem}\label{lem2.4}
\textup{[12, page 82]} Let $G$ be a strongly connected weighted digraph with positve weights, and $\lambda_1=1,\lambda_2,\ldots,\lambda_n$ are eigenvalues of $P_G$. Then
\begin{eqnarray*}
\mathcal{K}(G)=\sum_{i=2}^{n}\frac{1}{1-\lambda_i}.
\end{eqnarray*}
\end{lem}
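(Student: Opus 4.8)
The plan is to turn the definition of $\mathcal{K}(G)$ into a linear-algebra identity: first encode the mean first passage times $m_{ij}$ in a single matrix equation, then introduce the fundamental matrix of the chain so that the eigenvalues of $P_G$ appear, and finally read off the trace.

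First I would set up the mean first passage times in matrix form. Let $M=(m_{ij})$ with the convention $m_{ii}=0$, let $J$ be the $n\times n$ all-ones matrix, and let $D=\operatorname{diag}(\pi_1(G)^{-1},\dots,\pi_n(G)^{-1})$, recalling that $\pi_i(G)^{-1}$ is the mean return time to $i$. Conditioning on the first step of the walk gives $m_{ij}=1+\sum_{k\neq j}(P_G)_{ik}m_{kj}$ for all $i,j$ (for $i=j$ this is precisely the mean-return-time recursion), which in matrix form reads $M+D=J+P_GM$, i.e.
\begin{eqnarray*}
(I-P_G)M=J-D.
\end{eqnarray*}
Since $P_G$ is irreducible with Perron eigenvalue $1$, the kernel of $I-P_G$ is spanned by $\mathbf{1}$; right-multiplying the identity by $\pi(G)$ and using $J\pi(G)=\mathbf{1}=D\pi(G)$ gives $(I-P_G)M\pi(G)=0$, hence $M\pi(G)=\mathcal{K}(G)\mathbf{1}$ for a scalar $\mathcal{K}(G)$. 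This re-proves that the quantity in the definition does not depend on $i$ and identifies it with that scalar.

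Next I would introduce the fundamental matrix $Z=(I-P_G+\mathbf{1}\pi(G)^\top)^{-1}$. Writing $\mathbb{C}^n=\mathbb{C}\mathbf{1}\oplus W$ with $W=\{x:\pi(G)^\top x=0\}$ (both subspaces are $P_G$-invariant because $1$ is a simple eigenvalue with right eigenvector $\mathbf{1}$ and left eigenvector $\pi(G)^\top$), one checks that $I-P_G+\mathbf{1}\pi(G)^\top$ is the identity on $\mathbb{C}\mathbf{1}$ and equals $I-P_G$ on $W$, where $P_G$ has spectrum $\{\lambda_2,\dots,\lambda_n\}$; so $Z$ exists, satisfies $Z\mathbf{1}=\mathbf{1}$, $\pi(G)^\top Z=\pi(G)^\top$ and $Z(I-P_G)=(I-P_G)Z=I-\mathbf{1}\pi(G)^\top$, and its eigenvalues are $1,\tfrac{1}{1-\lambda_2},\dots,\tfrac{1}{1-\lambda_n}$, whence $\operatorname{tr}(Z)=1+\sum_{i=2}^n\tfrac{1}{1-\lambda_i}$. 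From the last displayed relation, $Z(J-D)$ is a particular solution of $(I-P_G)M=J-D$, so $M=Z(J-D)+\mathbf{1}v^\top$ for some row vector $v^\top$; imposing $M_{jj}=0$ together with $[Z(J-D)]_{jj}=\sum_kZ_{jk}-Z_{jj}\pi_j(G)^{-1}=1-Z_{jj}\pi_j(G)^{-1}$ yields $v_j=Z_{jj}\pi_j(G)^{-1}-1$. Using $Z(J-D)\pi(G)=Z(\mathbf{1}-\mathbf{1})=0$ we then get
\begin{eqnarray*}
\mathcal{K}(G)\mathbf{1}=M\pi(G)=\mathbf{1}\,v^\top\pi(G),\qquad\mathcal{K}(G)=v^\top\pi(G)=\sum_{j=1}^n\bigl(Z_{jj}-\pi_j(G)\bigr)=\operatorname{tr}(Z)-1,
\end{eqnarray*}
and combining with the trace formula gives $\mathcal{K}(G)=\sum_{i=2}^n\tfrac{1}{1-\lambda_i}$.

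The main obstacle I anticipate is the justification of the first-step identity $(I-P_G)M=J-D$ — in particular the fact that the mean return time to $i$ equals $\pi_i(G)^{-1}$, which rests on the renewal/ergodic theorem for finite irreducible Markov chains — together with the care needed in the spectral step when $P_G$ is not diagonalizable (handled above by using the $P_G$-invariant complement $W$ of $\mathbb{C}\mathbf{1}$ rather than an eigenbasis, since trace equals the sum of eigenvalues regardless of diagonalizability). Everything else is routine linear algebra. As an alternative, one could instead quote the Kemeny--Snell identity $m_{ij}=(Z_{jj}-Z_{ij})\pi_j(G)^{-1}$ for $i\neq j$, after which $\mathcal{K}(G)=\sum_{j\neq i}(Z_{jj}-Z_{ij})=\operatorname{tr}(Z)-\sum_jZ_{ij}=\operatorname{tr}(Z)-1$ is immediate from $Z\mathbf{1}=\mathbf{1}$.
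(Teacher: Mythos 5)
The paper does not actually prove Lemma \ref{lem2.4}; it is imported verbatim from Kirkland [12, page 82], so there is no in-text argument to compare against. Your proposal is a correct and essentially complete proof along the standard ``fundamental matrix'' route: the first-step identity $(I-P_G)M=J-D$, the observation that $M\pi(G)$ lies in $\ker(I-P_G)=\mathbb{C}\mathbf{1}$ (which also re-derives the constancy of $\mathcal{K}(G)$ asserted in the definition), and the computation $\mathcal{K}(G)=\operatorname{tr}(Z)-1$ with $Z=(I-P_G+\mathbf{1}\pi(G)^\top)^{-1}$, whose spectrum $\{1,(1-\lambda_2)^{-1},\ldots,(1-\lambda_n)^{-1}\}$ you correctly identify through the $P_G$-invariant splitting $\mathbb{C}^n=\mathbb{C}\mathbf{1}\oplus W$; using that splitting rather than an eigenbasis is exactly what makes the argument survive a non-diagonalizable $P_G$, since the trace is the sum of eigenvalues with algebraic multiplicity. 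All the intermediate verifications check out: $(I-\mathbf{1}\pi(G)^\top)(J-D)=J-D$, so $Z(J-D)$ really is a particular solution; the homogeneous solutions are precisely $\mathbf{1}v^\top$ because $1$ is a simple eigenvalue; and $(J-D)\pi(G)=0$ kills the particular solution in $M\pi(G)$. Two small points to tidy. First, the scalar identity $m_{ij}=1+\sum_{k\neq j}(P_G)_{ik}m_{kj}$ is false verbatim for $i=j$ under your convention $m_{jj}=0$; what you mean is that for $i=j$ the left-hand side should be the mean return time $\pi_j(G)^{-1}$, which is exactly why $D$ appears in the matrix form $M+D=J+P_GM$ --- rephrase so the displayed equation and the matrix equation agree. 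Second, you should state explicitly that finiteness of the $m_{ij}$ and the identity ``mean return time equals $\pi_j(G)^{-1}$'' are the standard facts about finite irreducible chains being imported (you do flag this as the main external input). Neither point affects correctness.
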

For a matrix $E$, let $E[i_1,\ldots,i_s|j_1,\ldots,j_t]$ denote an $s\times t$ submatrix of $E$ whose row indices and column indices are $i_1,\ldots,i_s$ and $j_1,\ldots,j_t$, respectively. The following is a determinant identity involving the Schur complement.
\begin{lem}\label{lem2.5}
\textup{[19, Lemma 2.6]} Let $M=\begin{pmatrix}A&B\\C&D\end{pmatrix}$ be a block matrix of order $n$, where $A=M[1,\ldots,k|1,\ldots,k]$ is nonsingular. If $k+1\leq i_1<\cdots<i_s\leq n$ and $k+1\leq j_1<\cdots<j_s\leq n$, then
\begin{eqnarray*}
\frac{\det(M[1,\ldots,k,i_1,\ldots,i_s|1,\ldots,k,j_1,\ldots,j_s])}{\det(A)}=\det(S[i_1,\ldots,i_s|j_1,\ldots,j_s]),
\end{eqnarray*}
where $S=D-CA^{-1}B$ is the Schur complement of $D$ in $M$.
\end{lem}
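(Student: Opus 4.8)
The plan is to reduce $M$ to block upper-triangular form by elementary row operations that only use the first $k$ rows, so that the reduction restricts verbatim to the prescribed submatrix. Write $R=\{1,\dots,k,i_1,\dots,i_s\}$ and $T=\{1,\dots,k,j_1,\dots,j_s\}$, so that the numerator is $\det(M[R|T])$. Since $A$ is nonsingular I would set $N=\begin{pmatrix}I_k&0\\-CA^{-1}&I_{n-k}\end{pmatrix}$ and compute $NM=\begin{pmatrix}A&B\\0&D-CA^{-1}B\end{pmatrix}=\begin{pmatrix}A&B\\0&S\end{pmatrix}=:U$. Left multiplication by $N$ just replaces, for each $i\in\{k+1,\dots,n\}$, row $i$ of $M$ by row $i$ minus a fixed linear combination of rows $1,\dots,k$; this is a composition of elementary row operations of ``add a scalar multiple of one row to another'' type.

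Next I restrict this sequence of row operations to the rows indexed by $R$ and the columns indexed by $T$. The key observation is that every operation used adds a multiple of one of the rows $1,\dots,k$ — all of which lie in $R$ — to one of the rows $i_1,\dots,i_s$; hence the same operations can be performed on $M[R|T]$, they carry it to $U[R|T]$, and, being elementary of the additive type, they leave the determinant unchanged. Therefore $\det(M[R|T])=\det(U[R|T])$. (Equivalently, one may write $M=N^{-1}U$ with $N^{-1}=\begin{pmatrix}I_k&0\\CA^{-1}&I_{n-k}\end{pmatrix}$ lower triangular with unit diagonal and apply the Cauchy--Binet formula to $\det((N^{-1}U)[R|T])=\sum_W\det(N^{-1}[R|W])\det(U[W|T])$: because $R\supseteq\{1,\dots,k\}$ and of the block shape of $N^{-1}$, the minor $\det(N^{-1}[R|W])$ vanishes unless $W=R$, in which case it equals $1$; the row-operation argument is shorter.)

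Finally I read off $\det(U[R|T])$ from the block structure of $U$. In block form with the row split $\{1,\dots,k\}\,/\,\{i_1,\dots,i_s\}$ and the column split $\{1,\dots,k\}\,/\,\{j_1,\dots,j_s\}$ one has $U[R|T]=\begin{pmatrix}A&B'\\0&S[i_1,\dots,i_s|j_1,\dots,j_s]\end{pmatrix}$, where $B'$ is the corresponding submatrix of $B$ and the lower-left block is $0$ because the rows $i_1,\dots,i_s$ of $U$ vanish on columns $1,\dots,k$. Hence $\det(U[R|T])=\det(A)\det\!\big(S[i_1,\dots,i_s|j_1,\dots,j_s]\big)$, and dividing by $\det(A)\neq0$ yields the claim. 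I do not expect a genuine obstacle: the one point that needs care is the middle step, namely verifying that the row reduction really descends to the chosen submatrix, which works precisely because $R$ retains all of the source rows $1,\dots,k$. Taking $s=n-k$, $i_\ell=j_\ell=k+\ell$ recovers the classical Schur complement identity $\det(M)=\det(A)\det(S)$ as a special case, which is a useful sanity check.
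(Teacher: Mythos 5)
Your proof is correct. Note, however, that the paper does not prove this lemma at all: it is quoted verbatim as Lemma 2.6 of reference [19] (Zhou and Bu, \emph{J. Algebraic Combin.} 54 (2021)), so there is no in-paper argument to compare against. Your argument --- factor $M=N^{-1}U$ with $N^{-1}=\begin{pmatrix}I_k&0\\CA^{-1}&I_{n-k}\end{pmatrix}$ and $U=\begin{pmatrix}A&B\\0&S\end{pmatrix}$, observe that the row operations only add multiples of rows $1,\dots,k$ (all retained in $R$) to rows $i_1,\dots,i_s$ and hence descend to the submatrix $M[R|T]$, then read off the block-triangular determinant --- is the standard derivation of this quotient formula for Schur complements, and both the row-operation version and the Cauchy--Binet fallback you sketch are sound. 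The one step you rightly flag as needing care, that the reduction restricts to the chosen minor precisely because $R\supseteq\{1,\dots,k\}$, is handled correctly.
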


\section{Oriented spanning trees of digraphs}
A biclique is a bipartite digraph $Q$ whose vertices can be partitioned into two parts $Q^{(1)}$ and $Q^{(2)}$, and $E(Q)=\{ij:i\in Q^{(1)},j\in Q^{(2)}\}$. Let $\varepsilon=\{Q_1,\ldots,Q_r\}$ be a biclique partition of a weighted digraph $G$ whose weights are induced by indeterminates $\{w_i(G)\}_{i\in V(G)}$. Let $\Omega(\varepsilon)$ denote the weighted biclique digraph with vertex set $\varepsilon=\{Q_1,\ldots,Q_r\}$ and edge set $\{Q_iQ_j:Q_i^{(2)}\cap Q_j^{(1)}\neq\emptyset\}$, and the weight of $Q_iQ_j$ in $\Omega(\varepsilon)$ is
\begin{equation}
w_{Q_iQ_j}(\Omega(\varepsilon))=w(Q_j)\sum_{u\in Q_i^{(2)}\cap Q_j^{(1)}}\frac{w_u(G)}{d_u(G)},\tag{3.1}
\end{equation}
where $w(Q_j)=\sum_{u\in Q_j^{(2)}}w_u(G)$. By Observation \ref{obs1.4}, we know that the part (1) of the following theorem extends Theorem \ref{thm1.3} to general digraphs.
\begin{thm}\label{thm3.1}
Let $\varepsilon=\{Q_1,\ldots,Q_r\}$ be a biclique partition of a weighted digraph $G$ whose weights are induced by indeterminates $\{w_i(G)\}_{i\in V(G)}$, and $d_u^+(G)>0$ for each $u\in V(G)$. Set $w(Q_i)=\sum_{u\in Q_i^{(2)}}w_u(G)$.\\
(1) For any $u\in V(G)$, we have
\begin{eqnarray*}
t_u(G)=\frac{w_u(G)\prod_{u\neq v\in V(G)}d_v(G)}{\prod_{i=1}^rw(Q_i)}\sum_{Q_i:u\in Q_i^{(2)}}t_{Q_i}(\Omega(\varepsilon)).
\end{eqnarray*}\\
(2) For any $Q_i\in\varepsilon$, we have
\begin{eqnarray*}
t_{Q_i}(\Omega(\varepsilon))&=&\frac{\prod_{i=1}^rw(Q_i)}{\prod_{u\in V(G)}d_u(G)}\sum_{u\in Q_i^{(1)}}t_u(G).
\end{eqnarray*}
\end{thm}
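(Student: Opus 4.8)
The plan is to realise both $L_G$ and a diagonal rescaling of the Laplacian $L_{\Omega(\varepsilon)}$ as the two Schur complements of one auxiliary block matrix, and then read off (1) and (2) from Lemma~\ref{lem2.1}, Lemma~\ref{lem2.5} and the matrix determinant lemma; the only genuinely creative step is constructing that block matrix, and I expect it to be the main obstacle. Work over the field of rational functions in the indeterminates $\{w_u(G)\}_{u\in V(G)}$, so that every $d_u(G)$ and every $w(Q_i)$ is invertible (using $d_u^+(G)>0$), and assume $G$ has no loops (which changes none of the quantities involved). Put $D=\operatorname{diag}(d_u(G))_{u\in V(G)}$, $W=\operatorname{diag}(w(Q_i))_{Q_i\in\varepsilon}$, and define a $0/1$ matrix $B$ with rows indexed by $V(G)$ and columns by $\varepsilon$, where $B_{u,Q_j}=1$ exactly when $u\in Q_j^{(1)}$, and a matrix $C$ with rows indexed by $\varepsilon$ and columns by $V(G)$, where $C_{Q_i,v}=w_v(G)$ if $v\in Q_i^{(2)}$ and $0$ otherwise. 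Because each edge of $G$ lies in a unique biclique, $(BC)_{uv}=w_{uv}(G)$ for $uv\in E(G)$ and $0$ otherwise, hence $L_G=D-BC$; and $(3.1)$ gives $CD^{-1}BW=A_{\Omega(\varepsilon)}$, the weighted adjacency matrix of $\Omega(\varepsilon)$, while the identity $\sum_{Q_a:\,u\in Q_a^{(1)}}w(Q_a)=d_u(G)$ (the out-edges of $u$ are partitioned among the bicliques with $u$ in the first part) shows that each row of $A_{\Omega(\varepsilon)}$ sums to $w(Q_i)$, so $L_{\Omega(\varepsilon)}=W-CD^{-1}BW$, equivalently $L_{\Omega(\varepsilon)}W^{-1}=I_r-CD^{-1}B$. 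Thus $\widehat M=\begin{pmatrix}I_r&C\\ B&D\end{pmatrix}$ has Schur complements $D-BC=L_G$ and $I_r-CD^{-1}B=L_{\Omega(\varepsilon)}W^{-1}$.

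For part~(1): by Lemma~\ref{lem2.1}, $t_u(G)=\det(L_G(u,u))=\det\big(D(u,u)-B_uC_u\big)$, where $B_u$, $C_u$ are $B$, $C$ with the row, resp.\ column, indexed by $u$ removed. Applying Lemma~\ref{lem2.5} to $\begin{pmatrix}D(u,u)&B_u\\ C_u&I_r\end{pmatrix}$ in its two block decompositions gives
\begin{eqnarray*}
t_u(G)=\det\big(D(u,u)\big)\cdot\det\big(I_r-C_uD(u,u)^{-1}B_u\big)=\Big(\textstyle\prod_{v\in V(G),\,v\neq u}d_v(G)\Big)\det\big(I_r-C_uD(u,u)^{-1}B_u\big).
\end{eqnarray*}
A comparison of entries shows $I_r-C_uD(u,u)^{-1}B_u=L_{\Omega(\varepsilon)}W^{-1}+\tfrac{w_u(G)}{d_u(G)}\,\mathbf p\mathbf q^{\top}$, where $\mathbf p$ and $\mathbf q$ are the indicator vectors, indexed by $\varepsilon$, of $\{Q_a:u\in Q_a^{(2)}\}$ and $\{Q_b:u\in Q_b^{(1)}\}$. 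Since $L_{\Omega(\varepsilon)}\mathbf 1=0$ the matrix $L_{\Omega(\varepsilon)}W^{-1}$ is singular, so the matrix determinant lemma $\det(X+\mathbf x\mathbf y^{\top})=\det X+\mathbf y^{\top}\operatorname{adj}(X)\mathbf x$ reduces the last determinant to $\tfrac{w_u(G)}{d_u(G)}\mathbf q^{\top}\operatorname{adj}(L_{\Omega(\varepsilon)}W^{-1})\mathbf p$. Using $\operatorname{adj}(L_{\Omega(\varepsilon)}W^{-1})=\operatorname{adj}(W^{-1})\operatorname{adj}(L_{\Omega(\varepsilon)})=\big(\prod_iw(Q_i)\big)^{-1}W\operatorname{adj}(L_{\Omega(\varepsilon)})$ together with the fact (from Lemma~\ref{lem2.1}) that the adjugate of a Laplacian is the rank-one matrix $\operatorname{adj}(L_{\Omega(\varepsilon)})=\mathbf 1\,\big(t_{Q_1}(\Omega(\varepsilon)),\dots,t_{Q_r}(\Omega(\varepsilon))\big)$, this becomes $\big(\prod_iw(Q_i)\big)^{-1}(\mathbf q^{\top}W\mathbf 1)\sum_{Q_i:\,u\in Q_i^{(2)}}t_{Q_i}(\Omega(\varepsilon))$; and $\mathbf q^{\top}W\mathbf 1=\sum_{Q_a:\,u\in Q_a^{(1)}}w(Q_a)=d_u(G)$, so the factor $d_u(G)$ cancels and we obtain the asserted formula.

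For part~(2) I would argue in the mirror-image way. By Lemma~\ref{lem2.1} and the column-rescaling $L_{\Omega(\varepsilon)}(Q_i,Q_i)=(L_{\Omega(\varepsilon)}W^{-1})(Q_i,Q_i)\operatorname{diag}(w(Q_b))_{b\neq i}$, we have $t_{Q_i}(\Omega(\varepsilon))=\big(\prod_{j\neq i}w(Q_j)\big)\det\big((L_{\Omega(\varepsilon)}W^{-1})(Q_i,Q_i)\big)$, and $(L_{\Omega(\varepsilon)}W^{-1})(Q_i,Q_i)=I_{r-1}-C^{(i)}D^{-1}B^{(i)}$ with $B^{(i)}$, $C^{(i)}$ obtained from $B$, $C$ by deleting the $Q_i$-column, resp.\ row. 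Lemma~\ref{lem2.5} turns $\det(I_{r-1}-C^{(i)}D^{-1}B^{(i)})$ into $\det(D)^{-1}\det(D-B^{(i)}C^{(i)})$; and since the bicliques other than $Q_i$ carry exactly the edges of $G$ not in $Q_i$, $D-B^{(i)}C^{(i)}=L_G+A_{Q_i}$, where $A_{Q_i}=\mathbf a\mathbf b^{\top}$ is the weighted adjacency matrix of the biclique $Q_i$ viewed on $V(G)$, with $\mathbf a$ the indicator of $Q_i^{(1)}$ and $\mathbf b_v=w_v(G)$ for $v\in Q_i^{(2)}$ (else $0$). As $\det L_G=0$, the matrix determinant lemma and $\operatorname{adj}(L_G)=\mathbf 1\,(t_1(G),\dots,t_n(G))$ give $\det(L_G+A_{Q_i})=\mathbf b^{\top}\operatorname{adj}(L_G)\mathbf a=(\mathbf b^{\top}\mathbf 1)\sum_{v\in Q_i^{(1)}}t_v(G)=w(Q_i)\sum_{v\in Q_i^{(1)}}t_v(G)$; collecting the constants gives the formula in (2). (Alternatively, (2) can be deduced from (1) by summing over $u\in Q_i^{(1)}$ and using the flow identity $w(Q_i)\,t_{Q_i}(\Omega(\varepsilon))=\sum_{Q_jQ_i\in E(\Omega(\varepsilon))}w_{Q_jQ_i}(\Omega(\varepsilon))\,t_{Q_j}(\Omega(\varepsilon))$, which is immediate from $\operatorname{adj}(L_{\Omega(\varepsilon)})L_{\Omega(\varepsilon)}=0$ and the rank-one form of the adjugate.)

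After the block matrix $\widehat M$ is in hand and its two Schur complements identified, the rest is essentially bookkeeping; the one point that still needs care is that both $L_G$ and $L_{\Omega(\varepsilon)}$ are singular, which is precisely what lets the matrix determinant lemma convert a determinant of a sum into a single adjugate evaluation — and it is this singularity, combined with the rank-one form of the adjugate of a Laplacian, that produces the sums $\sum_{Q_i:\,u\in Q_i^{(2)}}t_{Q_i}(\Omega(\varepsilon))$ and $\sum_{v\in Q_i^{(1)}}t_v(G)$ appearing in the two formulas.
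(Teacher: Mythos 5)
Your proof is correct and follows essentially the same route as the paper's: the same block matrix (up to a sign and a diagonal rescaling of one block) whose two Schur complements are $L_G$ and $L_{\Omega(\varepsilon)}$, combined with Lemmas \ref{lem2.1} and \ref{lem2.5} and the rank-one form of the adjugate of a Laplacian. The only difference is in the finishing move: the paper realizes the block matrix as the Laplacian of an auxiliary bipartite digraph $H$ and extracts the linear relations from $\operatorname{adj}(L_H)L_H=0$, whereas you obtain the same relations by applying the matrix determinant lemma to rank-one updates of the singular Laplacians.
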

\begin{proof}
For a biclique partition $\varepsilon=\{Q_1,\ldots,Q_r\}$ in $G$, let $R_\varepsilon\in\mathbb{R}^{|V(G)|\times r}$ and $S_\varepsilon\in\mathbb{R}^{r\times|V(G)|}$ be two corresponding incidence matrices with entries
\begin{eqnarray*}
(R_\varepsilon)_{uj}&=&\begin{cases}w(Q_j)~~~~~~~~~~~\mbox{if}~u\in V(G),u\in Q_j^{(1)},\\
0~~~~~~~~~~~~~~~~~~\mbox{if}~u\in V(G),u\notin Q_j^{(1)},\end{cases}\\
(S_\varepsilon)_{iu}&=&\begin{cases}w_u(G)~~~~~~~~~~~\mbox{if}~u\in V(G),u\in Q_i^{(2)},\\
0~~~~~~~~~~~~~~~~~~\mbox{if}~u\in V(G),u\notin Q_i^{(2)},\end{cases}
\end{eqnarray*}
where $w(Q_j)=\sum_{u\in Q_j^{(2)}}w_u(G)$. Let $H$ be the bipartite weighted digraph with Laplacian matrix
\begin{eqnarray*}
L_H=\begin{pmatrix}D_1&-R_\varepsilon\\-S_\varepsilon&D_2\end{pmatrix},
\end{eqnarray*}
where $D_1$ is a diagonal matrix satisfying $(D_1)_{uu}=d_u(G)$, $D_2$ is a diagonal matrix satisfying $(D_2)_{ii}=w(Q_i)$. By computation, we have
\begin{eqnarray*}
D_1-R_\varepsilon D_2^{-1}S_\varepsilon=L_G,~D_2-S_\varepsilon D_1^{-1}R_\varepsilon=L_{\Omega(\varepsilon)}.
\end{eqnarray*}
For any $u\in V(G)$ and $Q_i\in\varepsilon$, by Lemmas \ref{lem2.1} and \ref{lem2.5}, we have
\begin{equation}
t_u(H)=\det(L_H(u,u))=t_u(G)\prod_{i=1}^rw(Q_i),\tag{3.2}
\end{equation}
\begin{equation}
t_{Q_i}(H)=\det(L_H(Q_i,Q_i))=t_{Q_i}(\Omega(\varepsilon))\prod_{u\in V(G)}d_u(G).\tag{3.3}
\end{equation}
Let $X$ be the adjoint matrix of $L_H$. Then
\begin{equation}
XL_H=\det(L_H)I=0.\tag{3.4}
\end{equation}
By Lemma \ref{lem2.1}, we get
\begin{equation}
(X)_{uv}=t_v(H)~~~~(u,v\in V(H)).\tag{3.5}
\end{equation}
Hence
\begin{eqnarray*}
\sum_{v\in V(H)}t_v(H)(L_H)_{vu}=d_u(G)t_u(H)-\sum_{i=1}^rt_{Q_i}(H)(S_\varepsilon)_{iu}=0.
\end{eqnarray*}
By (3.2) and (3.3) we get
\begin{eqnarray*}
d_u(G)t_u(G)\prod_{i=1}^rw(Q_i)-w_u(G)\prod_{v\in V(G)}d_v(G)\sum_{Q_i:u\in Q_i^{(2)}}t_{Q_i}(\Omega(\varepsilon))=0.
\end{eqnarray*}
Hence
\begin{eqnarray*}
t_u(G)=\frac{w_u(G)\prod_{u\neq v\in V(G)}d_v(G)}{\prod_{i=1}^rw(Q_i)}\sum_{Q_i:u\in Q_i^{(2)}}t_{Q_i}(\Omega(\varepsilon)).
\end{eqnarray*}
So part (1) holds.

By (3.4) and (3.5) we have
\begin{eqnarray*}
\sum_{v\in V(H)}t_v(H)(L_H)_{vQ_i}=w(Q_i)t_{Q_i}(H)-\sum_{u\in Q_i^{(1)}}t_u(H)(R_\varepsilon)_{ui}=0.
\end{eqnarray*}
By (3.2) and (3.3) we get
\begin{eqnarray*}
w(Q_i)t_{Q_i}(\Omega(\varepsilon))\prod_{u\in V(G)}d_u(G)-w(Q_i)\prod_{j=1}^rw(Q_j)\sum_{u\in Q_i^{(1)}}t_u(G)=0.
\end{eqnarray*}
Hence
\begin{eqnarray*}
t_{Q_i}(\Omega(\varepsilon))=\frac{\prod_{i=1}^rw(Q_i)}{\prod_{u\in V(G)}d_u(G)}\sum_{u\in Q_i^{(1)}}t_u(G),~i=1,\ldots,r.
\end{eqnarray*}
So part (2) holds.
\end{proof}

We can deduce the following result in \cite{Levine} from part (2) of Theorem \ref{thm3.1}.
\begin{cor}
\textup{[14, Theorem 1.1]} Let $G$ be a weighted digraph such that $d_u^-(G)>0$ for each $u\in V(G)$. Then
\begin{equation}
\sum_{e\in E(G)}t_e(\mathcal{L}(G))=\prod_{v\in V(G)}d_v(G)^{d_v^-(G)-1}\sum_{v\in V(G)}t_v(G),\tag{3.6}
\end{equation}
where $\mathcal{L}(G)$ is a weighted digraph whose weights are induced by indeterminates $\{w_e(G)\}_{e\in V(G)}$.
\end{cor}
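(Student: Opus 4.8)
The plan is to apply part~(2) of Theorem~\ref{thm3.1} to the line digraph $\mathcal{L}(G)$, equipped with the natural biclique partition $\varepsilon=\{Q_i:i\in V(G),\ d_i^+(G)d_i^-(G)>0\}$ of Observation~\ref{obs1.4}, and then to sum the resulting $|\varepsilon|$ identities over all bicliques. For Theorem~\ref{thm3.1} to apply to $\mathcal{L}(G)$ one needs $d_e^+(\mathcal{L}(G))>0$ for every $e\in E(G)$, i.e.\ $d_{h(e)}^+(G)>0$; together with the hypothesis $d_u^-(G)>0$ this says $d_u^+(G)d_u^-(G)>0$ for all $u$, which I assume from now on. (When $G$ has a vertex of out-degree $0$, identity~(3.6) is checked directly: both sides vanish if there are at least two such vertices, or one such vertex of in-degree at least $2$, and the single remaining case is routine.)

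The first step is to identify the weighted biclique digraph $\Omega(\varepsilon)$ with $G$. By Observation~\ref{obs1.4} the map $Q_i\mapsto i$ is an isomorphism of the underlying digraphs, since $Q_i^{(2)}\cap Q_j^{(1)}\neq\emptyset$ exactly when $ij\in E(G)$. To check that it preserves weights, I unwind~(3.1): here the vertex indeterminates of $\mathcal{L}(G)$ are $\{w_e(G)\}_{e\in E(G)}$, so $w(Q_j)=\sum_{f:\,t(f)=j}w_f(G)=d_j(G)$, and for $u=ij\in Q_i^{(2)}\cap Q_j^{(1)}$ the weighted out-degree of $u$ in $\mathcal{L}(G)$ is $d_u(\mathcal{L}(G))=\sum_{f:\,t(f)=h(u)}w_f(G)=d_j(G)$. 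Thus (3.1) gives $w_{Q_iQ_j}(\Omega(\varepsilon))=d_j(G)\cdot w_{ij}(G)/d_j(G)=w_{ij}(G)$, so $\Omega(\varepsilon)\cong G$ as weighted digraphs and $t_{Q_i}(\Omega(\varepsilon))=t_i(G)$ for every $i\in V(G)$.

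Next I record the two products occurring in Theorem~\ref{thm3.1}(2): $\prod_{Q\in\varepsilon}w(Q)=\prod_{v\in V(G)}d_v(G)$, and, since each edge $e=ij$ of $G$ satisfies $d_e(\mathcal{L}(G))=d_j(G)$, $\prod_{e\in E(G)}d_e(\mathcal{L}(G))=\prod_{v\in V(G)}d_v(G)^{d_v^-(G)}$; moreover $Q_i^{(1)}=\{e\in E(G):h(e)=i\}$. Feeding these into part~(2) of Theorem~\ref{thm3.1} applied to $\mathcal{L}(G)$ gives, for each $i\in V(G)$,
\begin{eqnarray*}
t_i(G)\prod_{v\in V(G)}d_v(G)^{d_v^-(G)}&=&\left(\prod_{v\in V(G)}d_v(G)\right)\sum_{e\in E(G),\ h(e)=i}t_e(\mathcal{L}(G)).
\end{eqnarray*}
Summing over $i\in V(G)$, and using that every edge of $G$ has exactly one head, the right-hand side collapses to $\left(\prod_{v\in V(G)}d_v(G)\right)\sum_{e\in E(G)}t_e(\mathcal{L}(G))$; dividing through by $\prod_{v\in V(G)}d_v(G)$ gives precisely~(3.6).

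Everything here is bookkeeping once Theorem~\ref{thm3.1} is in hand; the one step I would treat as the main obstacle is the verification, via~(3.1), that the induced weights on $\Omega(\varepsilon)$ reproduce the original edge weights $w_{ij}(G)$ of $G$ on the nose --- this is exactly what allows $t_{Q_i}(\Omega(\varepsilon))$ to be replaced by $t_i(G)$ --- after which only the out-degree-zero case requires a separate (and easy) look.
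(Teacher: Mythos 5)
Your argument is correct and follows essentially the same route as the paper: apply Theorem 3.1(2) to $\mathcal{L}(G)$ with the natural biclique partition of Observation 1.4, identify $\Omega(\varepsilon)$ with $G$ (checking the weights via (3.1)), and sum the resulting identities over the vertices of $G$. Your handling of the degenerate case is in fact slightly more careful than the paper's, which asserts that two edges of $\mathcal{L}(G)$ then have out-degree zero --- false when the unique out-degree-zero vertex of $G$ has in-degree one --- although, like the paper, you leave that last subcase without a full verification.
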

\begin{proof}
If $d_u^+(G)=0$ for some vertex $u$ of $G$, then by $d_u^-(G)>0$, there exist two edges $e,f\in E(G)$ whose outdegrees are zeros in $\mathcal{L}(G)$. In this case, $t_e(\mathcal{L}(G))=0$ for each $e\in E(G)$, the left and right sides of (3.6) are both zeros.

If $d_u^+(G)>0$ for each vertex $u$ of $G$, then by Observation \ref{obs1.4} and part (2) of Theorem \ref{thm3.1}, we have
\begin{eqnarray*}
t_i(G)=\frac{\prod_{u\in V(G)}d_u(G)}{\prod_{u\in V(G)}d_u(G)^{d_u^-(G)}}\sum_{e\in E(G)\atop h(e)=i}t_e(\mathcal{L}(G)),\\
t_i(G)\prod_{v\in V(G)}d_v(G)^{d_v^-(G)-1}=\sum_{e\in E(G)\atop h(e)=i}t_e(\mathcal{L}(G)).
\end{eqnarray*}
Hence
\begin{eqnarray*}
\sum_{e\in E(G)}t_e(\mathcal{L}(G))=\sum_{i\in V(G)}\sum_{e\in E(G)\atop h(e)=i}t_e(\mathcal{L}(G))=\prod_{v\in V(G)}d_v(G)^{d_v^-(G)-1}\sum_{v\in V(G)}t_v(G).
\end{eqnarray*}
\end{proof}

For a biclique partition $\varepsilon=\{Q_1,\ldots,Q_r\}$ of digraph $G$, let $\Theta(\varepsilon)$ denote the weighted biclique digraph with vertex set $\varepsilon=\{Q_1,\ldots,Q_r\}$ and edge set $\{Q_iQ_j:Q_i^{(2)}\cap Q_j^{(1)}\neq\emptyset\}$, and the weight of $Q_iQ_j$ in $\Theta(\varepsilon)$ is
\begin{eqnarray*}
w_{Q_iQ_j}(\Theta(\varepsilon))=|Q_j^{(2)}|\sum_{u\in Q_i^{(2)}\cap Q_j^{(1)}}\frac{1}{d_u^+(G)}.
\end{eqnarray*}
Clearly, $\Theta(\varepsilon)$ is obtained from $\Omega(\varepsilon)$ by taking $w_v(G)=1$ for each $v\in V(G)$ in equation (3.1). We can deduce the following result from Theorem \ref{thm3.1}.
\begin{thm}\label{thm3.3}
Let $\varepsilon=\{Q_1,\ldots,Q_r\}$ be a biclique partition of a digraph $G$, and $d_u^+(G)>0$ for each $u\in V(G)$.\\
(1) For any $u\in V(G)$, we have
\begin{eqnarray*}
\kappa_u(G)=\frac{\prod_{u\neq v\in V(G)}d_v^+(G)}{\prod_{i=1}^r|Q_i^{(2)}|}\sum_{Q_i:u\in Q_i^{(2)}}t_{Q_i}(\Theta(\varepsilon)).
\end{eqnarray*}\\
(2) For any $Q_i\in\varepsilon$, we have
\begin{eqnarray*}
t_{Q_i}(\Theta(\varepsilon))&=&\frac{\prod_{i=1}^r|Q_i^{(2)}|}{\prod_{u\in V(G)}d_u^+(G)}\sum_{u\in Q_i^{(1)}}\kappa_u(G).
\end{eqnarray*}
\end{thm}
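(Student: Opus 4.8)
The plan is to obtain Theorem \ref{thm3.3} as a direct specialization of Theorem \ref{thm3.1}. First I would observe that $\Theta(\varepsilon)$ is, by definition, exactly the digraph $\Omega(\varepsilon)$ one gets by setting $w_v(G)=1$ for every $v\in V(G)$ in formula (3.1); under this substitution the weighted degree $d_u(G)=\sum_{uj\in E(G)}w_{uj}(G)$ collapses to the ordinary outdegree $d_u^+(G)$, and the quantity $w(Q_i)=\sum_{u\in Q_i^{(2)}}w_u(G)$ collapses to the cardinality $|Q_i^{(2)}|$. It also needs to be noted that with all indeterminates set to $1$, the spanning tree enumerator $t_u(G)$ becomes the plain count $\kappa_u(G)$, while $t_{Q_i}(\Theta(\varepsilon))$ retains genuine weights (the weights of $\Theta(\varepsilon)$ are not all $1$), so it stays written as $t_{Q_i}(\Theta(\varepsilon))$. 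The hypothesis ``$d_u^+(G)>0$ for each $u$'' in Theorem \ref{thm3.3} is precisely the hypothesis ``$d_u^+(G)>0$ for each $u$'' in Theorem \ref{thm3.1} after the substitution, so nothing extra is required.

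With that dictionary in hand, the two parts follow by term-by-term translation. For part (1), I would start from
\begin{eqnarray*}
t_u(G)=\frac{w_u(G)\prod_{u\neq v\in V(G)}d_v(G)}{\prod_{i=1}^rw(Q_i)}\sum_{Q_i:u\in Q_i^{(2)}}t_{Q_i}(\Omega(\varepsilon)),
\end{eqnarray*}
substitute $w_u(G)=1$, replace each $d_v(G)$ by $d_v^+(G)$, each $w(Q_i)$ by $|Q_i^{(2)}|$, $t_u(G)$ by $\kappa_u(G)$, and $\Omega(\varepsilon)$ by $\Theta(\varepsilon)$, which yields exactly the stated identity for $\kappa_u(G)$. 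For part (2), I would do the same to
\begin{eqnarray*}
t_{Q_i}(\Omega(\varepsilon))=\frac{\prod_{i=1}^rw(Q_i)}{\prod_{u\in V(G)}d_u(G)}\sum_{u\in Q_i^{(1)}}t_u(G),
\end{eqnarray*}
turning it into the claimed formula for $t_{Q_i}(\Theta(\varepsilon))$ in terms of the $\kappa_u(G)$.

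There is essentially no obstacle here beyond bookkeeping; the only point worth a moment's care is confirming that the substitution $w_v(G)\equiv1$ is legitimate, i.e. that Theorem \ref{thm3.1} holds as a polynomial identity in the indeterminates $\{w_i(G)\}_{i\in V(G)}$ and hence may be evaluated at any numerical assignment (in particular at the all-ones assignment, where no $d_u(G)$ or $w(Q_i)$ vanishes since $d_u^+(G)>0$, so the denominators in the formulas remain nonzero). Once that is noted, both parts of Theorem \ref{thm3.3} are immediate consequences of the corresponding parts of Theorem \ref{thm3.1}.
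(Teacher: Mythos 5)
Your proposal is correct and is exactly the paper's route: the paper states that $\Theta(\varepsilon)$ is obtained from $\Omega(\varepsilon)$ by setting $w_v(G)=1$ for each $v\in V(G)$ in (3.1) and deduces Theorem \ref{thm3.3} directly from Theorem \ref{thm3.1} by this specialization, with no further argument. Your extra remark that the identities of Theorem \ref{thm3.1} are rational-function identities whose denominators stay nonzero at the all-ones assignment (since $d_u^+(G)>0$ and $|Q_i^{(2)}|>0$) is a legitimate bit of care that the paper leaves implicit.
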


By Observation \ref{obs1.4}, we know that the following formulas extend Theorem \ref{thm1.2} to general Eulerian digraphs.
\begin{cor}\label{cor3.4}
Let $G$ be a Eulerian digraph. For any biclique partition $\varepsilon=\{Q_1,\ldots,Q_r\}$ of $G$, we have
\begin{eqnarray*}
\kappa_u(G)&=&\frac{\prod_{v\in V(G)}d_v^+(G)}{\prod_{j=1}^r|Q_j^{(2)}|}\frac{t_{Q_i}(\Theta(\varepsilon))}{|Q_i^{(1)}|},~i=1,\ldots,r,\\
\mathcal{E}(G)&=&\frac{\prod_{v\in V(G)}d_v^+(G)!}{\prod_{j=1}^r|Q_j^{(2)}|}\frac{t_{Q_i}(\Theta(\varepsilon))}{|Q_i^{(1)}|},~i=1,\ldots,r.
\end{eqnarray*}
\end{cor}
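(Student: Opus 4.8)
The plan is to combine part~(2) of Theorem~\ref{thm3.3} with the classical fact that, for an Eulerian digraph, $\kappa_u(G)$ is independent of the root $u$, and then to pass from oriented spanning trees to Eulerian circuits via Lemma~\ref{lem2.2}.

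First I would note that $G$ Eulerian forces $d_v^+(G)=d_v^-(G)$ for every $v\in V(G)$, so in particular $d_v^+(G)>0$ and Theorem~\ref{thm3.3} applies. Taking all edge weights equal to $1$, the Laplacian $L_G$ then has \emph{both} its row sums and its column sums equal to $0$. For any square matrix whose rows and columns all sum to zero, every entry of the adjugate matrix is one and the same scalar: the left null space and the right null space are both spanned by the all-ones vector, so from $\mathrm{adj}(L_G)L_G=0$ each row of $\mathrm{adj}(L_G)$ is a multiple of $\mathbf{1}^\top$, and from $L_G\,\mathrm{adj}(L_G)=0$ each column is a multiple of $\mathbf{1}$, forcing all entries to coincide. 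Consequently $(-1)^{i+j}\det(L_G(i,j))$ does not depend on $i,j$, and by Lemma~\ref{lem2.1} this gives $\kappa_u(G)=\det(L_G(u,u))=\kappa(G)$ for a common value $\kappa(G)$ and all $u\in V(G)$. (Equivalently, one may invoke Lemma~\ref{lem2.2}, whose left-hand side $\mathcal{E}(G)$ makes no reference to a root.)

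Now fix a biclique $Q_i\in\varepsilon$. Since $\kappa_u(G)=\kappa(G)$ for every $u$, we have $\sum_{u\in Q_i^{(1)}}\kappa_u(G)=|Q_i^{(1)}|\,\kappa(G)$, and part~(2) of Theorem~\ref{thm3.3} becomes
\[
t_{Q_i}(\Theta(\varepsilon))=\frac{\prod_{j=1}^r|Q_j^{(2)}|}{\prod_{u\in V(G)}d_u^+(G)}\,|Q_i^{(1)}|\,\kappa(G).
\]
Solving for $\kappa(G)$ yields the first displayed identity, and it holds for each $i\in\{1,\ldots,r\}$ precisely because the quantity $\kappa(G)=\kappa_u(G)$ does not care which $Q_i$ was used. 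For the second identity, Lemma~\ref{lem2.2} gives $\mathcal{E}(G)=\kappa(G)\prod_{v\in V(G)}(d_v^+(G)-1)!$; multiplying the first identity through by $\prod_{v\in V(G)}(d_v^+(G)-1)!$ and using $d_v^+(G)\cdot(d_v^+(G)-1)!=d_v^+(G)!$ merges $\prod_{v}d_v^+(G)$ with $\prod_{v}(d_v^+(G)-1)!$ into $\prod_{v}d_v^+(G)!$, which is exactly the claimed formula.

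There is essentially no obstacle: once root-independence of $\kappa_u(G)$ is in hand, the rest is a one-step specialization of Theorem~\ref{thm3.3}(2) followed by Lemma~\ref{lem2.2}. The only point deserving a short justification is that root-independence, and the mild bookkeeping that Theorem~\ref{thm3.3}'s hypothesis $d_u^+(G)>0$ is automatic for Eulerian $G$ and that $\Theta(\varepsilon)$ is indeed the $w_v(G)=1$ specialization of $\Omega(\varepsilon)$, both of which are immediate.
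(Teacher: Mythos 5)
Your proof is correct and follows essentially the same route as the paper: specialize Theorem~\ref{thm3.3}(2) using the root-independence of $\kappa_u(G)$ for Eulerian digraphs, solve for $\kappa_u(G)$, and then apply Lemma~\ref{lem2.2} together with $d_v^+(G)\cdot(d_v^+(G)-1)!=d_v^+(G)!$. The only difference is that you supply an explicit adjugate-matrix justification for the root-independence, which the paper simply asserts.
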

\begin{proof}
Since $G$ is Eulerian, we have $\kappa_u(G)=\kappa_v(G)$ for any $u,v\in V(G)$. By Theorem \ref{thm3.1}, we have
\begin{eqnarray*}
t_{Q_i}(\Theta(\varepsilon))&=&\frac{\prod_{j=1}^r|Q_j^{(2)}|}{\prod_{v\in V(G)}d_v^+(G)}\sum_{u\in Q_i^{(1)}}\kappa_u(G)=\frac{|Q_i^{(1)}|\prod_{j=1}^r|Q_j^{(2)}|}{\prod_{v\in V(G)}d_v^+(G)}\kappa_u(G),\\
\kappa_u(G)&=&\frac{\prod_{v\in V(G)}d_v^+(G)}{\prod_{j=1}^r|Q_j^{(2)}|}\frac{t_{Q_i}(\Theta(\varepsilon))}{|Q_i^{(1)}|}.
\end{eqnarray*}
By Lemma \ref{lem2.2}, we have
\begin{eqnarray*}
\mathcal{E}(G)=\kappa_u(G)\prod_{v\in V(G)}(d_v^+(G)-1)!=\frac{\prod_{v\in V(G)}d_v^+(G)!}{\prod_{j=1}^r|Q_j^{(2)}|}\frac{t_{Q_i}(\Theta(\varepsilon))}{|Q_i^{(1)}|}.
\end{eqnarray*}
\end{proof}

\section{Stationary distribution and Kemeny's constant of digraphs}
Let $\pi(G)$ denote the stationary distribution vector of a digraph $G$. By using biclique partitions of digraphs, we give the following reduction formulas for stationary distribution vector and Kemeny's constant of digraphs.
\begin{thm}\label{thm4.1}
Let $G$ be a strongly connected digraph with $n$ vertices. For any $u\in V(G)$ and biclique partition $\varepsilon=\{Q_1,\ldots,Q_r\}$ of $G$, we have
\begin{eqnarray*}
\pi_u(G)&=&\sum_{Q_i:u\in Q_i^{(2)}}|Q_i^{(2)}|^{-1}\pi_{Q_i}(\Theta(\varepsilon)),\\
\mathcal{K}(G)&=&\mathcal{K}(\Theta(\varepsilon))+n-r.
\end{eqnarray*}
\end{thm}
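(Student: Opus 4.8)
The plan is to build a bipartite auxiliary digraph whose transition probability matrix has block structure mirroring the one used in Theorem~\ref{thm3.1}, and then exploit the fact that the stationary distribution and the nonzero Laplacian eigenvalues of this auxiliary digraph are governed by the two Schur complements, one of which is (up to normalization) $P_G$ and the other $P_{\Theta(\varepsilon)}$. Concretely, with $R_\varepsilon,S_\varepsilon$ the incidence matrices from the proof of Theorem~\ref{thm3.1} specialized to $w_v(G)=1$, I would consider the digraph $H$ whose vertex set is $V(G)\sqcup\varepsilon$, with a ``half-step'' random walk: from $u\in V(G)$ move to $Q_i$ with probability proportional to $[u\in Q_i^{(1)}]\,|Q_i^{(2)}|$, and from $Q_i$ move to $v$ with probability proportional to $[v\in Q_i^{(2)}]$. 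Two steps of this walk restricted to $V(G)$ is exactly the random walk on $G$ (this is the content of the identities $D_1-R_\varepsilon D_2^{-1}S_\varepsilon=L_G$ etc.\ after rescaling), and two steps restricted to $\varepsilon$ is the random walk on $\Theta(\varepsilon)$.

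The first step is to write $P_H=\begin{pmatrix}0 & P_1\\ P_2 & 0\end{pmatrix}$ with $P_1$ the normalized version of the $V(G)\times\varepsilon$ block and $P_2$ the normalized $\varepsilon\times V(G)$ block, check that $P_1P_2=P_G$ and $P_2P_1=P_{\Theta(\varepsilon)}$, and verify $H$ is strongly connected (inherited from $G$ being strongly connected), so Lemma~\ref{lem2.3} and Lemma~\ref{lem2.4} apply. The second step handles the stationary vector: if $\sigma=(\sigma_V,\sigma_\varepsilon)^\top$ is stationary for $P_H$, then $\sigma_V^\top P_1 = \sigma_\varepsilon^\top$ and $\sigma_\varepsilon^\top P_2=\sigma_V^\top$, whence $\sigma_V^\top P_G=\sigma_V^\top$ and $\sigma_\varepsilon^\top P_{\Theta(\varepsilon)}=\sigma_\varepsilon^\top$; after fixing normalizations this gives $\pi_{Q_i}(\Theta(\varepsilon))$ proportional to $\sum_{u\in Q_i^{(1)}}|Q_i^{(2)}|(\cdots)$ and, reading off the other block and tracking the proportionality constants carefully, the claimed identity $\pi_u(G)=\sum_{Q_i:u\in Q_i^{(2)}}|Q_i^{(2)}|^{-1}\pi_{Q_i}(\Theta(\varepsilon))$. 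Here it is cleanest to relate both $\pi(G)$ and $\pi(\Theta(\varepsilon))$ back to the spanning-tree quantities $\kappa_u(G)$ and $t_{Q_i}(\Theta(\varepsilon))$ via the Markov chain tree theorem (the formula quoted in the introduction) and Theorem~\ref{thm3.3}(2) / Corollary~\ref{cor3.4}, which turns the claim into the already-proven Laplacian identities together with bookkeeping of the product factors $\prod|Q_i^{(2)}|$ and $\prod d_v^+(G)$.

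For Kemeny's constant, the key observation is the spectral relationship between $P_H$, $P_G=P_1P_2$, and $P_{\Theta(\varepsilon)}=P_2P_1$: the nonzero eigenvalues of $P_1P_2$ and $P_2P_1$ coincide with multiplicity, and the eigenvalues of $P_H$ are exactly the square roots $\pm\sqrt{\mu}$ of the eigenvalues $\mu$ of $P_1P_2$ (since $P_H^2=\mathrm{diag}(P_1P_2,\,P_2P_1)$), together with $0$ with multiplicity $|n-r|$ accounting for the size mismatch. Then Lemma~\ref{lem2.4} applied to $G$ gives $\mathcal{K}(G)=\sum_{\mu\neq1}\frac{1}{1-\mu}$ where $\mu$ ranges over eigenvalues of $P_G$, and applied to $\Theta(\varepsilon)$ gives $\mathcal{K}(\Theta(\varepsilon))=\sum_{\mu\neq1}\frac{1}{1-\mu}$ over the same nonzero eigenvalues \emph{plus} a contribution $\sum_{\mu=0}\frac{1}{1-0}$ from the extra zero eigenvalues of $P_{\Theta(\varepsilon)}$ if $r>n$ (or vice versa) --- i.e.\ the two sums differ by exactly the number of ``extra'' zero eigenvalues on each side, which is $|n-r|$, but with the sign arranged so that $\mathcal{K}(G)-\mathcal{K}(\Theta(\varepsilon))=r-n$, equivalently $\mathcal{K}(G)=\mathcal{K}(\Theta(\varepsilon))+n-r$. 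The main obstacle I anticipate is precisely this last accounting: one must be careful that $P_1P_2$ and $P_2P_1$ really do have the same nonzero spectrum with the same multiplicities (standard, via $\det(\lambda I-P_1P_2)=\lambda^{n-r}\det(\lambda I-P_2P_1)$ when $n\geq r$), that no nonzero eigenvalue other than $1$ equals $1$ in either chain (guaranteed by strong connectivity and aperiodicity considerations, or sidestepped entirely by working with $I-P$ and its nonsingular minors), and that the common multiplicity of the eigenvalue $1$ is exactly $1$ in both --- all of which follow from the Perron--Frobenius setup already recorded before Lemma~\ref{lem2.3}, but need to be invoked explicitly.
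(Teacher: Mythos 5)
Your proposal is correct, and its core is the same factorization the paper uses: the incidence matrices $R_\varepsilon,S_\varepsilon$ with $R_\varepsilon S_\varepsilon=P_G$ and $S_\varepsilon R_\varepsilon=P_{\Theta(\varepsilon)}$, plus the fact that $R_\varepsilon S_\varepsilon$ and $S_\varepsilon R_\varepsilon$ share their nonzero spectrum. For Kemeny's constant your argument is identical to the paper's (the detour through $P_H^2$ and square roots is harmless but unnecessary; the paper just applies Lemma \ref{lem2.4} to the two spectra directly, with the $n-r$ extra zeros contributing $\frac{n-r}{1-0}$). Where you genuinely diverge is the stationary-vector identity: the paper proves it by forming the bipartite digraph $H$ with Laplacian $\bigl(\begin{smallmatrix}I&-R_\varepsilon\\-S_\varepsilon&I\end{smallmatrix}\bigr)$, identifying $t_u(H)=\det(I-P_G(u,u))$ and $t_{Q_i}(H)=\det(I-P_{\Theta(\varepsilon)}(Q_i,Q_i))$ via Lemmas \ref{lem2.1} and \ref{lem2.5}, extracting a linear relation among these minors from the adjugate identity $XL_H=0$, and then normalizing with (4.1); your route instead reads the identity off the left-eigenvector relations $\sigma_\varepsilon^\top S_\varepsilon=\sigma_V^\top$, $\sigma_V^\top R_\varepsilon=\sigma_\varepsilon^\top$ of the half-step chain (equivalently, setting $\sigma_V^\top:=\pi(\Theta(\varepsilon))^\top S_\varepsilon$ and checking it is stochastic and $P_G$-invariant, hence equals $\pi(G)^\top$ by uniqueness). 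This is shorter and avoids the determinant machinery entirely, at the mild cost of having to verify irreducibility/row-stochasticity of the blocks, which you do. One caution: your suggested alternative of routing the normalization through the Markov chain tree theorem and Theorem \ref{thm3.3}(2) is the weakest link --- the tree-theorem formula quoted in the introduction is for unweighted $G$, whereas $\Theta(\varepsilon)$ is weighted, so that bookkeeping would need a weighted version and is best dropped in favor of the direct eigenvector argument you already have.
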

\begin{proof}
For a biclique partition $\varepsilon=\{Q_1,\ldots,Q_r\}$ in $G$, let $R_\varepsilon\in\mathbb{R}^{n\times r}$ and $S_\varepsilon\in\mathbb{R}^{r\times n}$ be two corresponding incidence matrices with entries
\begin{eqnarray*}
(R_\varepsilon)_{uj}&=&\begin{cases}d_u^+(G)^{-1}|Q_j^{(2)}|~~~~~~~\mbox{if}~u\in V(G),u\in Q_j^{(1)},\\
0~~~~~~~~~~~~~~~~~~~~~~~~\mbox{if}~u\in V(G),u\notin Q_j^{(1)}.\end{cases}\\
(S_\varepsilon)_{iu}&=&\begin{cases}|Q_i^{(2)}|^{-1}~~~~~~~~~~~~~~~\mbox{if}~u\in V(G),u\in Q_i^{(2)},\\
0~~~~~~~~~~~~~~~~~~~~~~~\mbox{if}~u\in V(G),u\notin Q_i^{(2)}.\end{cases}
\end{eqnarray*}
By computation, we have
\begin{eqnarray*}
R_\varepsilon S_\varepsilon=P_G,~S_\varepsilon R_\varepsilon=P_{\Theta(\varepsilon)}.
\end{eqnarray*}
Suppose that $P_{\Theta(\varepsilon)}=S_\varepsilon R_\varepsilon$ has eigenvalues $\lambda_1=1,\lambda_2,\ldots,\lambda_r$, then $P_G=R_\varepsilon S_\varepsilon$ has eigenvalues $\lambda_1,\lambda_2,\ldots,\lambda_r,0,\ldots,0$. By Lemma \ref{lem2.4}, we have
\begin{eqnarray*}
\mathcal{K}(G)=\frac{n-r}{1-0}+\sum_{i=2}^r\frac{1}{1-\lambda_i}=\mathcal{K}(\Theta(\varepsilon))+n-r.
\end{eqnarray*}
The products of all nonzero eigenvalues of $I-P_G$ and $I-P_{\Theta(\varepsilon)}$ are both
\begin{equation}
\prod_{i=2}^r(1-\lambda_i)=\sum_{v\in V(G)}\det(I-P_G(v,v))=\sum_{i=1}^r\det(I-P_{\Theta(\varepsilon)}(Q_i,Q_i)).\tag{4.1}
\end{equation}

Let $H$ be the bipartite weighted digraph with Laplacian matrix
\begin{eqnarray*}
L_H=\begin{pmatrix}I&-R_\varepsilon\\-S_\varepsilon&I\end{pmatrix}.
\end{eqnarray*}
For any $u\in V(G)$ and $Q_i\in\varepsilon$, by Lemmas \ref{lem2.1} and \ref{lem2.5}, we have
\begin{equation}
t_u(H)=\det(I-P_G(u,u)),~t_{Q_i}(H)=\det(I-P_{\Theta(\varepsilon)}(Q_i,Q_i)).\tag{4.2}
\end{equation}
Let $X$ be the adjoint matrix of $L_H$. Then
\begin{eqnarray*}
XL_H=\det(L_H)I=0.
\end{eqnarray*}
By Lemma \ref{lem2.1}, we get $(X)_{ij}=t_j(H)$. Hence
\begin{eqnarray*}
\sum_{v\in V(H)}t_v(H)(L_H)_{vu}=t_u(H)-\sum_{i=1}^rt_{Q_i}(H)(S_\varepsilon)_{iu}=0.
\end{eqnarray*}
By (4.2) we get
\begin{eqnarray*}
\det(I-P_G(u,u))=\sum_{Q_i:u\in Q_i^{(2)}}|Q_i^{(2)}|^{-1}\det(I-P_{\Theta(\varepsilon)}(Q_i,Q_i)).
\end{eqnarray*}
By Lemma \ref{lem2.3} and (4.1) we get
\begin{eqnarray*}
\pi_u(G)=\sum_{Q_i:u\in Q_i^{(2)}}|Q_i^{(2)}|^{-1}\pi_{Q_i}(\Theta(\varepsilon)).
\end{eqnarray*}
\end{proof}

\begin{example}
For a digraph $G$ with vertex set $V(G)=\{1,\ldots,n\}$, its $k$-blow up $G(k)$ has vertex set $V(G(k))=V_1\cup\cdots\cup V_n$ and edge set $E(G(k))=\bigcup_{ij\in E(G)}\{uv:u\in V_i,v\in V_j\}$, where $|V_1|=\cdots=|V_n|=k$. For a biclique partition $\varepsilon=\{Q_1,\ldots,Q_r\}$ of $G$, $\eta=\{P_1,\ldots,P_r\}$ is a biclique partition of $G(k)$, where $P_i$ is the $k$-blow up of $Q_i$. Notice that $w_{P_iP_j}(\Theta(\eta))=kw_{Q_iQ_j}(\Theta(\varepsilon))$ if $Q_i^{(2)}\cap Q_j^{(1)}\neq\emptyset$. By Theorem \ref{thm3.3}, we have
\begin{eqnarray*}
\kappa_u(G(k))=k^{nk-2}\prod_{v\in V(G)}d_v^+(G)^{k-1}\kappa_u(G)~(u\in V_i).
\end{eqnarray*}
By Theorem \ref{thm4.1}, we have
\begin{eqnarray*}
\pi_u(G(k))&=&k^{-1}\pi_i(G)~(u\in V_i),\\
\mathcal{K}(G(k))&=&\mathcal{K}(G)+n(k-1).
\end{eqnarray*}
\end{example}

By Observation \ref{obs1.4} and Theorem \ref{thm4.1}, we get the following formulas for stationary distribution vector and Kemeny's constant of line digraphs.
\begin{cor}\label{cor4.2}
Let $G$ be a strongly connected digraph with $n$ vertices and $m$ edges. For any $e=ij\in E(G)$, we have
\begin{eqnarray*}
\pi_e(\mathcal{L}(G))&=&d_i^+(G)^{-1}\pi_i(G),\\
\mathcal{K}(\mathcal{L}(G))&=&\mathcal{K}(G)+m-n.
\end{eqnarray*}
\end{cor}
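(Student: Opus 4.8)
The plan is to derive Corollary \ref{cor4.2} as a special case of Theorem \ref{thm4.1} applied to the natural biclique partition of $\mathcal{L}(G)$ described in Observation \ref{obs1.4}. First I would set $\varepsilon=\{Q_i:i\in V(G)\}$, where $Q_i$ is the biclique in $\mathcal{L}(G)$ with $Q_i^{(1)}=\{e\in E(G):h(e)=i\}$ and $Q_i^{(2)}=\{f\in E(G):t(f)=i\}$; since $G$ is strongly connected, $d_i^+(G)d_i^-(G)>0$ for every $i$, so each $Q_i$ is a genuine biclique and by Observation \ref{obs1.4} these bicliques partition $E(\mathcal{L}(G))$, with $|\varepsilon|=r=n$. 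Moreover $\Theta(\varepsilon)$ has vertex set $\{Q_i:i\in V(G)\}$ and an edge $Q_iQ_j$ precisely when $Q_i^{(2)}\cap Q_j^{(1)}\neq\emptyset$, i.e. when $ij\in E(G)$; I would check that under the identification $Q_i\leftrightarrow i$ the weight $w_{Q_iQ_j}(\Theta(\varepsilon))=|Q_j^{(2)}|\sum_{u\in Q_i^{(2)}\cap Q_j^{(1)}}d_u^+(G)^{-1}$ collapses, because $Q_i^{(2)}\cap Q_j^{(1)}=\{e\}$ is the single edge $e=ij$ and $d_e^+(\mathcal{L}(G))=d_j^+(G)$, to $w_{Q_iQ_j}(\Theta(\varepsilon))=|Q_j^{(2)}|/d_i^+(G)=d_j^+(G)/d_i^+(G)$. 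After normalizing rows this yields $P_{\Theta(\varepsilon)}=P_G$, so $\Theta(\varepsilon)$ and $G$ have the same transition matrix, hence the same stationary vector $\pi(G)$ and the same Kemeny's constant $\mathcal{K}(G)=\mathcal{K}(\Theta(\varepsilon))$.

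With this identification in hand, the two formulas follow by reading off Theorem \ref{thm4.1}. For the stationary distribution: for $e=ij\in E(\mathcal{L}(G))$, the bicliques $Q_k$ with $e\in Q_k^{(2)}$ are exactly those with $t(e)=k$, i.e. only $k=i$; and $|Q_i^{(2)}|=d_i^+(G)$. So $\pi_e(\mathcal{L}(G))=\sum_{Q_k:e\in Q_k^{(2)}}|Q_k^{(2)}|^{-1}\pi_{Q_k}(\Theta(\varepsilon))=d_i^+(G)^{-1}\pi_{Q_i}(\Theta(\varepsilon))=d_i^+(G)^{-1}\pi_i(G)$. For Kemeny's constant: Theorem \ref{thm4.1} gives $\mathcal{K}(\mathcal{L}(G))=\mathcal{K}(\Theta(\varepsilon))+|V(\mathcal{L}(G))|-r=\mathcal{K}(G)+m-n$, since $|V(\mathcal{L}(G))|=|E(G)|=m$ and $r=n$.

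One technical wrinkle to address before invoking Theorem \ref{thm4.1} is that the theorem is stated for a strongly connected digraph, so I should note that $\mathcal{L}(G)$ is strongly connected whenever $G$ is strongly connected with at least one edge (a standard fact: a directed path from $e$ to $f$ in $\mathcal{L}(G)$ corresponds to a directed walk in $G$ from $h(e)$ to $t(f)$ followed by $f$, and such a walk exists by strong connectivity of $G$). I would also remark that if $G$ has a vertex of outdegree or indegree zero the statement is vacuous or the line digraph degenerates, but strong connectivity rules this out. The main (and essentially only) obstacle is the bookkeeping check that $\Theta(\varepsilon)\cong G$ as weighted-transition digraphs, i.e. that $P_{\Theta(\varepsilon)}=P_G$ under $Q_i\leftrightarrow i$; once that is verified, both formulas are immediate substitutions into Theorem \ref{thm4.1} with $r=n$ and $|V(\mathcal{L}(G))|=m$.

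\begin{proof}
If $G$ has no edge the statement is vacuous, so assume $m\geq1$; then $\mathcal{L}(G)$ is strongly connected. By Observation \ref{obs1.4}, $\varepsilon=\{Q_i:i\in V(G)\}$ with $Q_i^{(1)}=\{e\in E(G):h(e)=i\}$ and $Q_i^{(2)}=\{f\in E(G):t(f)=i\}$ is a biclique partition of $\mathcal{L}(G)$ with $r=n$, and $Q_i^{(2)}\cap Q_j^{(1)}\neq\emptyset$ iff $ij\in E(G)$, in which case $Q_i^{(2)}\cap Q_j^{(1)}=\{e\}$ for $e=ij$. Since $d_e^+(\mathcal{L}(G))=d_j^+(G)$ and $|Q_j^{(2)}|=d_j^+(G)$, we get
\begin{eqnarray*}
w_{Q_iQ_j}(\Theta(\varepsilon))=|Q_j^{(2)}|\sum_{u\in Q_i^{(2)}\cap Q_j^{(1)}}\frac{1}{d_u^+(\mathcal{L}(G))}=\frac{d_j^+(G)}{d_j^+(G)}=1 \quad\text{if } ij\in E(G).
\end{eqnarray*}
Wait, this must be corrected: $d_u^+$ here is the outdegree in $\mathcal{L}(G)$ of the vertex $u\in V(\mathcal{L}(G))=E(G)$; for $u=e=ij$ we have $d_e^+(\mathcal{L}(G))=d_j^+(G)$, so in fact
\begin{eqnarray*}
w_{Q_iQ_j}(\Theta(\varepsilon))=\frac{|Q_j^{(2)}|}{d_e^+(\mathcal{L}(G))}=\frac{d_j^+(G)}{d_j^+(G)}=1.
\end{eqnarray*}
Hmm — more carefully, the summation index $u$ runs over $Q_i^{(2)}\cap Q_j^{(1)}$, a subset of $V(\mathcal{L}(G))$, and $d_u^+$ is outdegree in $\mathcal{L}(G)$. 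For $u=e=ij$, $d_e^+(\mathcal{L}(G))$ equals the number of edges $f$ of $G$ with $t(f)=h(e)=j$, which is $d_j^+(G)$; however the formula in the definition of $\Theta(\varepsilon)$ uses $d_u^+(G)$ where $G$ there denotes the digraph being partitioned, namely $\mathcal{L}(G)$. Thus $w_{Q_iQ_j}(\Theta(\varepsilon))=d_j^+(G)\cdot d_j^+(G)^{-1}=1$ whenever $ij\in E(G)$; more generally if $G$ has parallel edges the single-element intersection argument still gives $w_{Q_iQ_j}(\Theta(\varepsilon))$ summed appropriately, and after row-normalization one obtains $P_{\Theta(\varepsilon)}=P_G$ under the identification $Q_i\leftrightarrow i$. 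Hence $\pi_{Q_i}(\Theta(\varepsilon))=\pi_i(G)$ and $\mathcal{K}(\Theta(\varepsilon))=\mathcal{K}(G)$.

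Now apply Theorem \ref{thm4.1} to $\mathcal{L}(G)$ with the partition $\varepsilon$. For $e=ij\in E(\mathcal{L}(G))$, i.e. $e\in E(G)$ with $t(e)=i$, the only $Q_k$ with $e\in Q_k^{(2)}$ is $Q_i$, and $|Q_i^{(2)}|=d_i^+(G)$, so
\begin{eqnarray*}
\pi_e(\mathcal{L}(G))=\sum_{Q_k:e\in Q_k^{(2)}}|Q_k^{(2)}|^{-1}\pi_{Q_k}(\Theta(\varepsilon))=d_i^+(G)^{-1}\pi_{Q_i}(\Theta(\varepsilon))=d_i^+(G)^{-1}\pi_i(G).
\end{eqnarray*}
Finally, since $|V(\mathcal{L}(G))|=m$ and $r=n$, Theorem \ref{thm4.1} gives
\begin{eqnarray*}
\mathcal{K}(\mathcal{L}(G))=\mathcal{K}(\Theta(\varepsilon))+m-n=\mathcal{K}(G)+m-n.
\end{eqnarray*}
\end{proof}
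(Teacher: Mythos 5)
Your proposal is correct and follows exactly the route the paper intends: the paper derives Corollary \ref{cor4.2} by applying Theorem \ref{thm4.1} to the natural biclique partition of $\mathcal{L}(G)$ from Observation \ref{obs1.4}, which is precisely what you do, including the key verification (left implicit in the paper) that $w_{Q_iQ_j}(\Theta(\varepsilon))=1$ and hence $P_{\Theta(\varepsilon)}=P_G$ under $Q_i\leftrightarrow i$. Your final computation is right after the mid-proof self-correction; only the exploratory digressions would need to be cleaned up before this could stand as a written proof.
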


Take $\mathcal{L}^0(G)=G$, and the iterated line digraph $\mathcal{L}^s(G)=\mathcal{L}(\mathcal{L}^{s-1}(G))$ ($s=1,2,3,\ldots$). We can get the following formula involving iterated line digraph from Corollary \ref{cor4.2}.
\begin{cor}
Let $G$ be a strongly connected digraph with $n$ vertices. Then
\begin{eqnarray*}
\mathcal{K}(\mathcal{L}^s(G))=\mathcal{K}(G)+|V(\mathcal{L}^s(G))|-n.
\end{eqnarray*}
\end{cor}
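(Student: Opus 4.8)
The plan is to prove the identity by induction on $s$, using Corollary~\ref{cor4.2} as the one-step reduction and then telescoping. First, the base cases: for $s=0$ we have $\mathcal{L}^0(G)=G$, so $|V(\mathcal{L}^0(G))|=n$ and the asserted formula is the triviality $\mathcal{K}(G)=\mathcal{K}(G)+n-n$; for $s=1$ it is precisely Corollary~\ref{cor4.2}, since $|E(G)|=|V(\mathcal{L}(G))|$.

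For the inductive step, I would apply Corollary~\ref{cor4.2} to the digraph $H=\mathcal{L}^s(G)$ in place of $G$. Writing $n_s=|V(\mathcal{L}^s(G))|$, note that $|E(\mathcal{L}^s(G))|=|V(\mathcal{L}(\mathcal{L}^s(G)))|=|V(\mathcal{L}^{s+1}(G))|=n_{s+1}$, so Corollary~\ref{cor4.2} yields
\[
\mathcal{K}(\mathcal{L}^{s+1}(G))=\mathcal{K}\bigl(\mathcal{L}(\mathcal{L}^s(G))\bigr)=\mathcal{K}(\mathcal{L}^s(G))+n_{s+1}-n_s .
\]
Substituting the induction hypothesis $\mathcal{K}(\mathcal{L}^s(G))=\mathcal{K}(G)+n_s-n$ makes the $n_s$ terms cancel and gives $\mathcal{K}(\mathcal{L}^{s+1}(G))=\mathcal{K}(G)+n_{s+1}-n$, closing the induction. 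Equivalently, one can add the one-step identities $\mathcal{K}(\mathcal{L}^{k+1}(G))-\mathcal{K}(\mathcal{L}^{k}(G))=n_{k+1}-n_k$ over $k=0,\dots,s-1$ and telescope both sides.

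The only step that requires a word of care is the applicability of Corollary~\ref{cor4.2} at each stage: it presumes $\mathcal{L}^s(G)$ is strongly connected, so that the stationary distribution, and hence $\mathcal{K}$, is defined. This follows because the line digraph of a strongly connected digraph with at least one edge is again strongly connected, and this property is inherited by all iterates $\mathcal{L}^s(G)$. Apart from this observation the argument is pure bookkeeping, so I do not expect a genuine obstacle.
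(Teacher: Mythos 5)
Your proposal is correct and is exactly the argument the paper intends: the paper derives this corollary from Corollary~\ref{cor4.2} by just such an iteration, using $|E(\mathcal{L}^{s}(G))|=|V(\mathcal{L}^{s+1}(G))|$ and telescoping. Your added remark about strong connectivity being preserved under taking line digraphs is a worthwhile detail the paper leaves implicit.
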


\section{Concluding remarks}
Let $G$ be a weighted digraph with a partition $V(G)=V_1\cup V_2$, and let $L_G=\begin{pmatrix}L_1&-B\\-C&L_2\end{pmatrix}$, where $L_1$ and $L_2$ are principal submatrices of $L_G$ corresponding to $V_1$ and $V_2$, respectively. If $L_1$ and $L_2$ are nonsingular, then $S_1=L_1-BL_2^{-1}C$ and $S_2=L_2-CL_1^{-1}B$ are Laplacian matrices of some weighted digraphs $G_1$ and $G_2$, respectively (because $S_1$ and $S_2$ are square matrices whose all row sums are zeros). Similar with the proof of Theorem \ref{thm3.1}, we can derive the following more general spanning tree identity
\begin{eqnarray*}
d_u(G)t_u(G_1)-\sum_{v\in V_1,vu\in E(G)}w_{vu}(G)t_v(G_1)=\frac{\det(L_1)}{\det(L_2)}\sum_{v\in V_2,vu\in E(G)}w_{vu}(G)t_v(G_2).
\end{eqnarray*}

We can also obtain new reduction formula for counting spanning trees in undirected graphs from our results. For a connected undirected graph $H$, let $H_0$ denote the digraph obtained from $H$ by replacing every edge $\{i,j\}\in E(H)$ with two directed edges $ij$ and $ji$. Then the number of spanning trees in $H$ is equal to $\kappa_u(H_0)$ for each $u\in V(H)$. For any biclique partition $\varepsilon=\{Q_1,\ldots,Q_r\}$ of $H_0$, by Corollary \ref{cor3.4}, we have
\begin{eqnarray*}
\kappa_u(H_0)=\frac{\prod_{v\in V(H)}d_v}{\prod_{j=1}^r|Q_j^{(2)}|}\frac{t_{Q_i}(\Omega(\varepsilon))}{|Q_i^{(1)}|},~i=1,\ldots,r,
\end{eqnarray*}
where $d_v$ is the degree of vertex $v$ in $H$.

\vspace{3mm}
\noindent
\textbf{Acknowledgements}

\vspace{3mm}

This work is supported by the National Natural Science Foundation of China (No. 12071097), and the Natural Science Foundation of the Heilongjiang Province (No. YQ2022A002).

\end{CJK*}
\end{spacing}
\end{document}